\newtheorem{theorem}{Theorem}[section]
\newtheorem{lemma}[theorem]{Lemma}
\newtheorem{algorithm}[theorem]{Algorithm}
\newtheorem{definition}[theorem]{Definition}
\theoremstyle{definition}
\newtheorem{remark}[theorem]{Remark}
\def\XXint#1#2#3{{\setbox0=\hbox{$#1{#2#3}{\int}$}
     \vcenter{\hbox{$#2#3$}}\kern-.5\wd0}}
\def\ud{{\rm\,d}}
\def\fl{{\rm\,fl}}
\def\C{\mathbb{C}}
\def\N{\mathbb{N}}
\def\R{\mathbb{R}}
\def\Sph{\mathbb{S}}
\def\OO{\mathcal{O}}
\def\pr(#1){\left({#1}\right)}
\def\br[#1]{\left[{#1}\right]}
\def\abs#1{\left|{#1}\right|}
\def\norm#1{\left\|{#1}\right\|}
\def\conj#1{\overline{#1}}
\def\ii{{\rm i}}
\def\for{\hbox{ for }}
\newcommand{\rank}{\operatorname{rank}}
\journal{Applied and Computational Harmonic Analysis}
\begin{document}

\begin{frontmatter}



\title{Fast and backward stable transforms between spherical harmonic expansions and bivariate Fourier series}


\author[RMS]{Richard Mika\"el Slevinsky\corref{cor1}}
\ead{Richard.Slevinsky@umanitoba.ca}
\ead[url]{https://home.cc.umanitoba.ca/~slevinrm/}

\cortext[cor1]{Corresponding author}
\address[RMS]{Department of Mathematics, University of Manitoba, Winnipeg, Manitoba, Canada}

\begin{abstract}
A rapid transformation is derived between spherical harmonic expansions and their analogues in a bivariate Fourier series. The change of basis is described in two steps: firstly, expansions in normalized associated Legendre functions of all orders are converted to those of order zero and one; then, these intermediate expressions are re-expanded in trigonometric form. The first step proceeds with a butterfly factorization of the well-conditioned matrices of connection coefficients. The second step proceeds with fast orthogonal polynomial transforms via hierarchically off-diagonal low-rank matrix decompositions. Total pre-computation requires at best $\mathcal{O}(n^3\log n)$ flops; and, asymptotically optimal execution time of $\mathcal{O}(n^2\log^2 n)$ is rigorously proved via connection to Fourier integral operators.
\end{abstract}

\begin{keyword}
Spherical harmonics \sep connection coefficients \sep hierarchical matrices \sep semiseparable matrices \sep butterfly factorization \sep interpolative decomposition



\MSC[2010] 33C55 \sep 65F99 \sep 65Y99.%

\end{keyword}

\end{frontmatter}


\section{Introduction}

Spherical harmonics are the natural basis for square integrable functions on the sphere in the sense of Lebesgue. As eigenfunctions of the Laplace--Beltrami operator, spherical harmonics are mathematically elegant; and with orthonormalization, stable numerical methods are available for tensor calculus. Another basis for the sphere is derived by doubling the co-latitudinal angle and extending the original function (or data) with block-mirror centrosymmetry. A bivariate Fourier basis results, and by the grace of the fast Fourier transform (FFT) comes rapid evaluation at points equispaced-in-angle. Through nonuniform FFTs, rapid evaluation is extended to bivariate grids with little to no regard for their distribution~\cite{Dutt-Rokhlin-14-1368-93,Potts-Steidl-24-2013-03,Greengard-Lee-46-443-04,Ruiz-Antolin-Townsend-17}. Townsend, Wilber, and Wright~\cite{Townsend-Wilber-Wright-38-C403-16} have recently experimented with low-rank compression techniques for bivariate Fourier series on the sphere.

Several numerical methods exist for the purposes of the synthesis and analysis of spherical harmonic expansions; that is, the rapid evaluation of spherical harmonic expansions at points (nearly) equispaced-in-angle and the determination of expansion coefficients by data at such points. Healy et al.~\cite{Healy-Rockmore-Kostelec-Moore-9-341-03} derive the first modern approach based on the so-called split-Legendre functions and the Driscoll--Healy sampling theorem~\cite{Driscoll-Healy-15-202-94}. Suda and Takami~\cite{Suda-Takami-71-703-02} and Kunis and Potts~\cite{Kunis-Potts-161-75-03} accelerate the implied polynomial interpolation by using the Fast Multipole Method (FMM). On the basis of classical WKB asymptotics, Mohlenkamp~\cite{Mohlenkamp-5-159-99} accelerates synthesis and analysis by constructing localized numerical approximations that incorporate phase information. More recently, Rokhlin and Tygert~\cite{Rokhlin-Tygert-27-1903-06} combine the theory of spectral connection matrices of associated Legendre functions with Chandrasekaran and Gu's fast eigensolver for symmetric block-diagonal plus semiseparable matrices~\cite{Chandrasekaran-Gu-96-723-04} to derive the first fast algorithms with asymptotically optimal pre-computation, though the optimal complexity is predicted ``only for absurdly large degrees.'' Tygert completes his trilogy with significant improvements to the first methodology~\cite{Tygert-227-4260-08} and a new method~\cite{Tygert-229-6181-10} based on the butterfly algorithm~\cite{Michielssen-Boag-44-1086-96,ONeil-Woolfe-Rokhlin-28-203-10}. Tygert's method has been implemented in several real-world applications~\cite{Reuter-Ratner-Seideman-131-094108-1-09,Seljebotn-199-12-12,Wedi-Hamrud-Mozdzynski-141-3450-13}. Thus for a new contribution, it is as important to maximize the practicality of the spherical harmonic basis as it is to completely eliminate pre-computation.

Shortly after the turn of the $20^{\rm th}$ century, Schuster published foundational work on the Fourier coefficients of spherical harmonics~\cite{Schuster-200-181-03}. In the computer era, Hofsommer and Potters~\cite{Hofsommer-Potters-63-460-60} show how one may implement the conversion in practice, and the Fourier coefficients have been more recently studied as well~\cite{Sneeuw-Bun-70-224-96,Gruber-Abrykosov-90-525-16}. Although numerical evidence is presented to suggest recursion-through-order is stable in the downward direction, extended precision is still required and the $\OO(n^3)$ complexity of the conversion is not asymptotically optimal. Conversion from spherical harmonics to bivariate Fourier series is not one-to-one. In order to make sense of inversion, a weighted least-squares solution is proposed in~\cite{Hofsommer-Potters-63-460-60}. However, when the problem is properly subdivided, a more natural least-squares problem awaits.

The present work may be viewed as an extension of the butterfly algorithm to the connection problem between spherical harmonic expansions and their bivariate Fourier series. A change of basis implies several advantages over the full synthesis and analysis. Firstly, certain subproblems are well-conditioned and backward stable algorithms are derived for their numerical solution; this obviates the necessity for extended precision~\cite{Tygert-227-4260-08,Tygert-229-6181-10}, as Gaussian quadratures are not part of the algorithm. Secondly, in the light of the connection problem, a so-called {\em rank property} of the matrices of connection coefficients is rigorously proved, establishing the asymptotically optimal complexity $\OO(n^2\log^2 n)$. This rank property is also universal in its proof of Tygert's acceleration of synthesis and analysis as well~\cite{Tygert-229-6181-10}. Lastly, the formulation as a connection problem presents the possibility to improve the pre-computation by requiring a mere skeleton of the full plan as fast and backward stable algorithms may convert between expansions in neighbouring orders. The main technological asset of the butterfly algorithm is the interpolative decomposition; algorithms for interpolative decompositions include~\cite{Cheng-Gimbutas-Martinsson-Rokhlin-26-1389-05,Liberty-et-al-104-20167-07}. An open source implementation is available in the software package {\tt FastTransforms.jl}~\cite{Slevinsky-GitHub-FastTransforms}. 

Two prominent numerical methods for evolving stiff time-dependent partial differential equations are exponential integrators such as the fourth-order Runge--Kutta scheme ETDRK4~\cite{Cox-Matthews-176-430-02}, and implicit-explicit (IMEX) Runge--Kutta methods such as those developed in~\cite{Ascher-Ruuth-Spiteri-25-151-97}. While IMEX schemes are versatile when considering more general stiff linear operators, their order may drop to one in the stiff limit. On the other hand, exponential integrators deal with the stiffness exactly, but an efficient implementation requires a basis that diagonalizes the stiff linear operator. In the basis of spherical harmonics, certain linear differential operators such as the Laplace--Beltrami operator are diagonal. Similarly, common nonlinearities are diagonalized by pointwise evaluation on a grid. Thus a method that utilizes both spherical harmonics and bivariate Fourier series and may stably convert between representations is ideal for time evolution.

The conversion of expansions in associated Legendre functions to those involving zeroth and first orders is useful for the solution of partial differential equations with initial conditions in the spherical harmonic basis and whose partial differential operators on the sphere have low splitting rank. On the sphere, this offers the advantage of resolution in an isometric Hilbert space. A global spectral method for such problems is derived by Townsend and Olver~\cite{Townsend-Olver-299-106-15}.

That spherical harmonics diagonalize certain singular integral operators on the surface of the sphere could be the basis for extending the ultraspherical spectral method for singular integral equations~\cite{Slevinsky-Olver-332-290-17}, which results in highly-structured linear systems, to multi-sphere scattering with spectral accuracy.

\section{Fundamentals}

Let $\mu$ be a positive Borel measure on $D\subset\R^n$. The inner product:
\begin{equation}
\langle f, g \rangle = \int_D \conj{f(x)} g(x)\ud\mu(x),
\end{equation}
where $\conj{f(x)}$ denotes complex conjugation, induces the norm $\norm{f}_2 = \sqrt{\langle f,f\rangle}$ and the associated Hilbert space $L^2(D,\ud\mu(x))$. In case of ambiguity, the notation $\langle f, g\rangle_{{\rm d}\mu}$ is used to distinguish between different measures.

Let $\Sph^2\subset\R^3$ denote the unit $2$-sphere, $\theta\in[0,\pi]$ the co-latitudinal angle, $\varphi\in[0,2\pi)$ the longitudinal angle, and $\ud\Omega = \sin\theta\ud\theta\ud\varphi$ the measure generated by the solid angle $\Omega$ subtended by a spherical cap. Then, any function $f\in L^2(\Sph^2,\ud\Omega)$ may be expanded in spherical harmonics:
\begin{equation}\label{eq:sphericalharmonicexpansion}
f(\theta,\varphi) = \sum_{\ell=0}^{+\infty}\sum_{m=-\ell}^{+\ell} f_\ell^m Y_\ell^m(\theta,\varphi) = \sum_{m=-\infty}^{+\infty}\sum_{\ell=\abs{m}}^{+\infty} f_\ell^m Y_\ell^m(\theta,\varphi),
\end{equation}
where the expansion coefficients are:
\begin{equation}
f_\ell^m = \dfrac{\langle Y_\ell^m, f\rangle}{\langle Y_\ell^m, Y_\ell^m\rangle}.
\end{equation}
Let $\N_0$ denote the non-negative integers. Bandlimiting Eq.~\eqref{eq:sphericalharmonicexpansion} to $\ell\le n\in\N_0$ results in the best degree-$n$ trigonometric polynomial approximation of $f\in L^2(\Sph^2,\ud\Omega)$.

Using the Condon--Shortley phase convention~\cite{Condon-Shortley-51}, orthonormal spherical harmonics are given by:
\begin{equation}\label{eq:sphericalharmonics}
Y_\ell^m(\theta,\varphi) = \dfrac{e^{\ii m\varphi}}{\sqrt{2\pi}} \underbrace{\ii^{m+|m|}\sqrt{(\ell+\tfrac{1}{2})\dfrac{(\ell-m)!}{(\ell+m)!}} P_\ell^m(\cos\theta)}_{\tilde{P}_\ell^m(\cos\theta)},\qquad \ell\in\N_0,\quad -\ell\le m\le \ell,
\end{equation}
and we will find the connection between associated Legendre functions and ultraspherical polynomials~\cite[\S 18.11.1]{Olver-et-al-NIST-10} particularly useful:
\begin{equation}\label{eq:ALPtoUS}
P_\ell^m(\cos\theta) = (-2)^m(\tfrac{1}{2})_m(\sin\theta)^m C_{\ell-m}^{(m+\frac{1}{2})}(\cos\theta).
\end{equation}
In Eq.~\eqref{eq:sphericalharmonics}, the notation $\tilde{P}_\ell^m$ is used to denote orthonormality for fixed $m$ in the sense of $L^2([-1,1],\ud x)$. In Eq.~\eqref{eq:ALPtoUS}, $(x)_n = \frac{\Gamma(x+n)}{\Gamma(x)}$ is the Pochhammer symbol~\cite{Abramowitz-Stegun-65}.

\subsection{The spherical harmonic connection problem}

Expansions in families of orthogonal functions may be related by a change of basis. This defines the classical connection problem.

\begin{definition}
Let $\{\phi_n(x)\}_{n\in\N_0}$ be a family of orthogonal functions with respect to $L^2(\hat{D},\ud\hat{\mu}(x))$ and let $\{\psi_n(x)\}_{n\in\N_0}$ be another family of orthogonal functions with respect to $L^2(D,\ud\mu(x))$. The connection coefficients:
\begin{equation}
c_{\ell,n} = \dfrac{\langle \psi_\ell, \phi_n\rangle_{{\rm d}\mu}}{\langle \psi_\ell, \psi_\ell\rangle_{{\rm d}\mu}},
\end{equation}
allow for the expansion:
\begin{equation}
\phi_n(x) = \sum_{\ell=0}^\infty c_{\ell,n} \psi_\ell(x).
\end{equation}
\end{definition}

\begin{theorem}
Let $\{\phi_n(x)\}_{n\in\N_0}$ and $\{\psi_n(x)\}_{n\in\N_0}$ be two families of {\em orthonormal} functions with respect to $L^2(D,\ud\mu(x))$. Then the connection coefficients satisfy:
\begin{equation}
\sum_{\ell=0}^\infty \conj{c_{\ell,m}}c_{\ell,n} = \delta_{m,n},
\end{equation}
where $\delta_{m,n}$ is the Kronecker delta function~\cite[Chap. 24]{Abramowitz-Stegun-65}.
\end{theorem}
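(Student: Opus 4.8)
The plan is to recognize the claimed identity as a restatement of the generalized Parseval relation for the orthonormal system $\{\psi_\ell\}_{\ell\in\N_0}$, and to obtain it by evaluating $\langle\phi_m,\phi_n\rangle$ in two complementary ways. First I would observe that, because $\{\psi_n\}$ is orthonormal, the denominator in the connection coefficients is $\langle\psi_\ell,\psi_\ell\rangle=1$, so that $c_{\ell,n}=\langle\psi_\ell,\phi_n\rangle$ is precisely the $\ell$-th generalized Fourier coefficient of $\phi_n$ relative to $\{\psi_\ell\}$. The orthonormality of the family $\{\phi_n\}$ simultaneously supplies the target value $\langle\phi_m,\phi_n\rangle=\delta_{m,n}$.

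Next I would substitute the expansions $\phi_m=\sum_{\ell}c_{\ell,m}\psi_\ell$ and $\phi_n=\sum_{k}c_{k,n}\psi_k$ furnished by the preceding Definition into $\langle\phi_m,\phi_n\rangle$ and exploit the sesquilinearity of the inner product (conjugate-linear in its first slot) to write it as the double sum $\sum_{\ell}\sum_{k}\conj{c_{\ell,m}}\,c_{k,n}\langle\psi_\ell,\psi_k\rangle$. Orthonormality of $\{\psi_\ell\}$ then collapses $\langle\psi_\ell,\psi_k\rangle=\delta_{\ell,k}$, leaving the single sum $\sum_{\ell}\conj{c_{\ell,m}}\,c_{\ell,n}$. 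Equating this with $\langle\phi_m,\phi_n\rangle=\delta_{m,n}$ yields the assertion.

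The hard part will be the analytic justification of interchanging the infinite summations with the integral defining the inner product, since a termwise manipulation of two formally infinite series is not automatic. Because the expansions converge in $L^2(D,\ud\mu(x))$ and the inner product is continuous with respect to the induced norm $\norm{\cdot}_2$ (by the Cauchy--Schwarz inequality), the passage to the limit is legitimate; equivalently, Bessel's inequality controls the tails uniformly. I would make this rigorous by truncating to the partial sums $\phi_n^{(N)}=\sum_{\ell=0}^{N}c_{\ell,n}\psi_\ell$, computing $\langle\phi_m^{(N)},\phi_n^{(N)}\rangle=\sum_{\ell=0}^{N}\conj{c_{\ell,m}}\,c_{\ell,n}$ exactly from orthonormality of $\{\psi_\ell\}$, and then letting $N\to\infty$ using $\phi_n^{(N)}\to\phi_n$ in norm together with continuity of $\langle\cdot,\cdot\rangle$. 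A remark worth flagging is that the Definition's expansion presupposes that $\{\psi_\ell\}$ represents every $\phi_n$ exactly (i.e. that the system is complete enough for the data at hand); granting that hypothesis, the limiting argument closes the proof.
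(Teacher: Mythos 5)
Your proposal is correct and follows essentially the same route as the paper: expand $\phi_m$ and $\phi_n$ in the $\psi_\ell$ basis, use orthonormality of $\{\psi_\ell\}$ to collapse the double sum, and equate with $\langle\phi_m,\phi_n\rangle=\delta_{m,n}$. The only difference is that you explicitly justify the interchange of summation and integration via partial sums and norm-continuity of the inner product, a point the paper's one-line computation passes over silently.
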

\begin{proof}
We expand the orthonormal functions $\phi_m$ and $\phi_n$ in the basis $\{\psi_\ell\}_{\ell\in\N_0}$, and integrate the product:
\begin{equation}
\delta_{m,n} = \langle \phi_m, \phi_n\rangle = \int_D \conj{\left(\sum_{\ell=0}^\infty c_{\ell,m} \psi_\ell(x)\right)}\left(\sum_{\ell=0}^\infty c_{\ell,n} \psi_\ell(x)\right)\ud\mu(x) = \sum_{\ell=0}^\infty \conj{c_{\ell,m}}c_{\ell,n}.
\end{equation}
\end{proof}

Since the space of algebraic polynomials of degree at most $n$ is finite-dimensional, if the source and target families of orthogonal functions consist of a weight function multiplying algebraic polynomials, then the expansion of one family in the other basis is finite-dimensional and the connection coefficients populate a matrix. If, in addition, the source and target families are orthonormal, the matrix of connection coefficients has orthonormal columns and thereby unit singular values. Conversion of expansions between families of orthonormal functions in the same finite-dimensional Hilbert space is therefore well-conditioned in the sense of the induced matrix $2$-norm.

Any matrix $A\in\R^{m\times n}$, $m\ge n$, with orthonormal columns has the Moore--Penrose pseudoinverse $A^+ = A^\top$. Where the classical connection problem is finite-dimensional but rectangular, the Moore--Penrose pseudoinverse provides a least-squares solution to the inversion problem.

While much is gained by discussing connection coefficients in such generality, we will now consider the connection coefficients pertinent to the spherical harmonics. The normalized associated Legendre functions are a family of orthonormal functions for the same Hilbert space $L^2([-1,1],\ud x)$. In fact, all even-ordered $\tilde{P}_\ell^m(x)$ are polynomials and all odd-ordered $\tilde{P}_\ell^m(x)$ are polynomials multiplied by the weight $\sqrt{1-x^2}$.

The three-term recurrence relations for the associated Legendre functions may be combined to produce a five-term recurrence relation satisfied by the connection coefficients~\cite{Salzer-16-705-73}. However, it is theoretically challenging to prove backward stability of this formulation even though the problem is well-conditioned. Furthermore, the formulation via recurrence relations either requires $\OO(n^3)$ square roots, $\OO(n^3)$ flops, and $\OO(1)$ storage for on-the-fly calculation or $\OO(n^3)$ flops and $\OO(n^3)$ storage if the matrices are pre-computed. Either case presents us with a significant compromise. Instead, we will derive a {\em backward stable} method that either requires $\OO(n^2)$ square roots, $\OO(n^3)$ flops, and $\OO(1)$ storage for on-the-fly calculation or $\OO(n^3)$ flops and $\OO(n^2)$ storage if the representation is pre-computed.

\begin{definition}
Let $G_n$ denote the real Givens rotation:
\[
G_n = \begin{bmatrix}
1 & \cdots & 0 & 0 & 0 & \cdots & 0\\
\vdots & \ddots & \vdots & \vdots & \vdots & & \vdots\\
0 & \cdots & c_n & 0 & s_n & \cdots & 0\\
0& \cdots & 0 & 1 & 0 & \cdots & 0\\
0 & \cdots & -s_n & 0 & c_n & \cdots & 0\\
\vdots & & \vdots & \vdots & \vdots & \ddots & \vdots\\
0 & \cdots & 0 & 0 & 0 & \cdots & 1\\
\end{bmatrix},
\]
where the sines $s_n = \sin\theta_n$ and the cosines $c_n = \cos\theta_n$, for some $\theta_n\in[0,2\pi)$, are in the intersections of the $n^{\rm th}$ and $n+2^{\rm nd}$ rows and columns, embedded in the identity of a conformable size. 
\end{definition}

Let $I_{m\times n}$ denote the rectangular identity matrix with ones on the main diagonal and zeros everywhere else.

\begin{theorem}\label{theorem:SS}
The connection coefficients between $\tilde{P}_{n+m+2}^{m+2}(\cos\theta)$ and $\tilde{P}_{\ell+m}^m(\cos\theta)$ are:
\begin{equation}\label{eq:SScoefficients}
c_{\ell,n}^{m} = \left\{\begin{array}{ccc} (2\ell+2m+1)(2m+2)\sqrt{\dfrac{(\ell+2m)!}{(\ell+m+\frac{1}{2})\ell!}\dfrac{(n+m+\frac{5}{2})n!}{(n+2m+4)!}}, & \for & \ell \le n,\quad \ell+n\hbox{ even},\\
-\sqrt{\dfrac{(n+1)(n+2)}{(n+2m+3)(n+2m+4)}}, & \for & \ell = n+2,\\
0, & & otherwise.
\end{array} \right.
\end{equation}
Furthermore, the matrix of connection coefficients $C^{(m)} \in \R^{(n+3)\times (n+1)}$ may be represented via the product of $n$ Givens rotations:
\[
C^{(m)} = G_0^{(m)}G_1^{(m)}\cdots G_{n-2}^{(m)}G_{n-1}^{(m)} I_{(n+3)\times (n+1)},
\]
where the sines and cosines for the Givens rotations are given by:
\begin{equation}\label{eq:GRcoefficients}
s_n^m = \sqrt{\dfrac{(n+1)(n+2)}{(n+2m+3)(n+2m+4)}},\quad{\rm and}\quad c_n^m = \sqrt{\dfrac{(2m+2)(2n+2m+5)}{(n+2m+3)(n+2m+4)}}.
\end{equation}
\end{theorem}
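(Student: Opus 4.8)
The plan is to trade the associated--Legendre connection problem for an ultraspherical one, solve that in closed form, reinstate the normalizing constants, and then read the Givens factorization off the resulting entries.

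\emph{Reduction to Gegenbauer polynomials.} Substituting \eqref{eq:ALPtoUS} into the bracketed factor of \eqref{eq:sphericalharmonics}, and writing $x=\cos\theta$ with $\sin^2\theta=1-x^2$, each normalized function becomes a constant times a power of $\sin\theta$ times an ultraspherical polynomial,
\[
\tilde P_{\ell+m}^m(\cos\theta) = K_{m,\ell}\,(\sin\theta)^m\,C_\ell^{(m+\frac{1}{2})}(x),\qquad \tilde P_{n+m+2}^{m+2}(\cos\theta) = K_{m+2,n}\,(\sin\theta)^{m+2}\,C_n^{(m+\frac{5}{2})}(x),
\]
with $K_{m,\ell}=\ii^{m+|m|}(-2)^m(\tfrac{1}{2})_m\sqrt{(\ell+m+\tfrac{1}{2})\,\ell!/(\ell+2m)!}$. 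Since $(\sin\theta)^{m+2}=(\sin\theta)^m(1-x^2)$, the whole problem reduces to expanding $(1-x^2)\,C_n^{(m+\frac{5}{2})}(x)$ in the basis $\{C_\ell^{(m+\frac{1}{2})}(x)\}$; if $\gamma_\ell$ denotes the resulting Gegenbauer coefficient then $c_{\ell,n}^m=(K_{m+2,n}/K_{m,\ell})\,\gamma_\ell$. Parity of the Gegenbauer polynomials forces $\gamma_\ell=0$ whenever $\ell+n$ is odd, and the degree $n+2$ of the left side bounds $\ell\le n+2$, which already accounts for the support in \eqref{eq:SScoefficients}.

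\emph{The coefficients.} I would obtain $\gamma_\ell$ by orthogonality in $L^2([-1,1],(1-x^2)^m\,\ud x)$, so that $\gamma_\ell$ is the moment $\int_{-1}^1 C_n^{(m+\frac{5}{2})}(x)\,C_\ell^{(m+\frac{1}{2})}(x)\,(1-x^2)^{m+1}\,\ud x$ divided by the squared norm of $C_\ell^{(m+\frac{1}{2})}$. Its evaluation rests on the ultraspherical structure relation
\[
(n+\lambda)\,C_n^{(\lambda)}(x) = \lambda\left[\,C_n^{(\lambda+1)}(x)-C_{n-2}^{(\lambda+1)}(x)\,\right],
\]
(a one-line induction on $n$), which both lowers the weight exponent and, applied a second time, bridges the parameter gap between $m+\tfrac{5}{2}$ and $m+\tfrac{1}{2}$; equivalently one may quote the classical ultraspherical connection formula. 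This splits cleanly into the isolated top term $\ell=n+2$ and the family $\ell\le n$, giving $\gamma_{n+2}$ and $\gamma_\ell$ as explicit ratios of factorials and Pochhammer symbols. Multiplying by $K_{m+2,n}/K_{m,\ell}$ then finishes: the Condon--Shortley phases cancel because $\ii^{(m+2)+|m+2|-m-|m|}=\ii^4=1$ (so the coefficients are real), the prefactor collapses to $4(m+\tfrac{1}{2})(m+\tfrac{3}{2})=(2m+1)(2m+3)$, and simplifying the factorials should reproduce \eqref{eq:SScoefficients}, including the clean value $c_{n+2,n}^m=-\sqrt{(n+1)(n+2)/[(n+2m+3)(n+2m+4)]}$ for the top entry.

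\emph{The Givens factorization.} With the entries in hand I would expand the product of rotations one column at a time. Because $G_j^{(m)}$ couples only rows $j$ and $j+2$, applying the sequence of rotations to the $n$th basis vector telescopes: the rotation coupling rows $n$ and $n+2$ deposits the subdiagonal entry $-s_n^m$ in row $n+2$, and each lower rotation $G_{n-2}^{(m)},G_{n-4}^{(m)},\dots$ peels off a single factor, so that the entry landing in row $\ell\le n$ (of the same parity) is the product
\[
c_n^m\,c_{\ell-2}^m\,s_\ell^m\,s_{\ell+2}^m\cdots s_{n-2}^m,
\]
an empty product of sines when $\ell=n$. The boundary values $c_{-2}^m=c_{-1}^m=1$ needed at the bottom are produced automatically by \eqref{eq:GRcoefficients}. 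The top entry matches \eqref{eq:GRcoefficients} at once; substituting \eqref{eq:GRcoefficients} into the displayed product and cancelling the chained Pochhammer numerators against denominators should collapse it exactly to the $\ell\le n$ branch of \eqref{eq:SScoefficients}. Finally, the identity $(c_n^m)^2+(s_n^m)^2=1$, immediate from \eqref{eq:GRcoefficients}, confirms that each $G_j^{(m)}$ is a genuine rotation, consistent with $C^{(m)}$ having orthonormal columns.

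The main difficulty is algebraic rather than conceptual: evaluating the full family of ultraspherical moments for all $\ell\le n$ (not merely the endpoints), and then verifying that the telescoping product of square roots collapses exactly to the stated coefficients. The delicate points are the careful handling of the boundary rows $\ell\in\{0,1\}$ and of the top row $\ell=n+2$ (where one must check that the highest rotation is present and contributes precisely $-s_n^m$), together with tracking the sign carried by the Condon--Shortley convention; this is where an error would most easily arise.
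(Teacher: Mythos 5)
Your proposal is correct, and its first half is in substance the same computation as the paper's: the paper combines DLMF 18.9.7 and 18.9.8 to write the change of basis as (inverse of a bandwidth-$2$ upper-triangular matrix)$\times$(bandwidth-$2$ rectangular matrix), the inverse being diagonal-plus-upper-semiseparable, whereas you evaluate the same Gegenbauer moments through the intermediate family $C^{(m+\frac32)}$; the telescoped connection formula $C_j^{(\lambda+1)}=\lambda^{-1}\sum_{i\le j,\ i\equiv j}(i+\lambda)C_i^{(\lambda)}$ that your route needs is exactly the semiseparable inverse in the paper's phrasing, and your bookkeeping of the phases and of the prefactor $4(m+\tfrac12)(m+\tfrac32)$ is right. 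The genuine difference is in the Givens factorization, where you argue in the opposite direction: you expand $G_0^{(m)}\cdots G_{n-1}^{(m)}I_{(n+3)\times(n+1)}$ column by column and must then verify that the telescoping products $c_n^m\,s_{n-2}^m\cdots s_\ell^m\,c_{\ell-2}^m$ collapse to the closed form \eqref{eq:SScoefficients} --- precisely the algebraic check you flag as the main difficulty (it does go through; e.g.\ at $\ell=n$ both sides reduce to $(2m+2)\sqrt{\tfrac{(2n+2m+1)(2n+2m+5)}{(n+2m+1)(n+2m+2)(n+2m+3)(n+2m+4)}}$). The paper instead constructs the rotations to annihilate the subdiagonal entries $c_{q+2,q}^m$ one at a time and lets orthonormality of the columns do the rest: once a column has a single nonzero entry it must be $\pm1$, and orthogonality to that column forces the remainder of its row to vanish, so after $n$ rotations one lands on $I_{(n+3)\times(n+1)}$ with no entry-by-entry verification. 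Your version buys an explicit, self-contained identity for every matrix entry; the paper's buys brevity by outsourcing the cancellation to orthonormality. Both are sound.
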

\begin{proof}
Both recurrence relations involving ultraspherical polynomials~\cite[\S 18.9.7 \& \S 18.9.8]{Olver-et-al-NIST-10} are essentially:
\begin{align*}
C_n^{(m+\frac{1}{2})}(\cos\theta) & = \dfrac{2m+1}{2n+2m+1}\left( C_n^{(m+\frac{3}{2})}(\cos\theta) - C_{n-2}^{(m+\frac{3}{2})}(\cos\theta)\right),\quad{\rm and}\\
\sin^2\theta C_n^{(m+\frac{5}{2})}(\cos\theta) & = \dfrac{(n+2m+3)(n+2m+4)}{(2m+3)(2n+2m+5)}C_n^{(m+\frac{3}{2})}(\cos\theta) - \dfrac{(n+1)(n+2)}{(2m+3)(2n+2m+5)}C_{n+2}^{(m+\frac{3}{2})}(\cos\theta).
\end{align*}
Thus, the entire transformation from an expansion in $\sin^2\theta C_n^{(m+\frac{5}{2})}(\cos\theta)$ to an expansion in $C_n^{(m+\frac{1}{2})}(\cos\theta)$ may be represented by the product of the inverse of a square upper-triangular banded matrix and a rectangular banded matrix both with bandwidth $2$. The inverse of an upper-triangular banded matrix of bandwidth $2$ is a diagonal-plus-upper-semiseparable matrix of semiseparability rank $2$. Translating this result into the notation of normalized associated Legendre functions, we arrive at $c_{\ell,n}^m$ in Eq.~\eqref{eq:SScoefficients}.

The parity in the source and target bases encodes a chessboard pattern of zeros in $C^{(m)}$. Start by applying a Givens rotation from the left to introduce a zero in the third row of the first column. Since the columns of $C^{(m)}$ are orthonormal, $s_0^m = -c_{2,0}^m$. Apply another Givens rotation from the left to introduce a zero in the fourth row of the second column of the conversion matrix. Again, we find that $s_1^m = -c_{3,1}^m$. Due to the orthonormality, the first rotation introduces zeros in every entry of the first row but the first. Similarly, the second rotation introduces zeros in every entry of the second row but the second. Continuing with $n-2$ more Givens rotations, we arrive at $I_{(n+3)\times(n+1)}$.
\end{proof}

To illustrate the order of operations, the matrices of connection coefficients may be represented as:
\[
\begin{tikzpicture}[baseline={(current bounding box.center)},scale=1.6,y=-1cm]
  \normalizedfusedconversion{0.0}{0.0}{$C^{(m)}$};
  \node (equals) at (1.75,0.5) {\Large $=$};
  \foreach \j in {0.0,0.1,0.2} {
    \tikzrotation{2.0*\j+2.4}{\j}
  }
  \foreach \j in {0.6,0.7,0.8} {
    \tikzrotation{2.0*\j+2.2}{\j}
  }
  \node (ddots) at (3.1,0.5) {$\ddots$};
  \drawbrace{2.4}{3.8}{1.1}{$G_0^{(m)}\cdots G_{n-1}^{(m)}$}
  \rectangular{4.0}{0.0}{{\relscale{0.9}$I_{(n+3)\times(n+1)}$}};
\end{tikzpicture}
\]
The arrows indicate which rows are altered by a Givens rotation; rotations nearest the rectangular identity are applied first.

For a real number $x$, we denote floating-point approximations to quantities by $\fl(x)$. Using the IEEE 754-2008 standard model for floating-point arithmetic, whenever $x$ and $y$ are floating-point numbers and $\circledast$ is one of the four operations $+$, $-$, $\times$, or $\div$, we have:
\begin{equation}
\fl(x\circledast y) = (x\circledast y)(1+\delta)^{\pm1},\quad{\rm where}\quad |\delta| \le \epsilon_{\rm mach},
\end{equation}
where $\epsilon_{\rm mach}$ is a unit of least precision (ulp). As well, whenever $x$ is a floating-point number:
\begin{equation}
\fl(\!\sqrt{x}) = \sqrt{x}\,(1+\delta)^{\pm1}\quad{\rm where}\quad |\delta| \le \epsilon_{\rm mach}.
\end{equation}

Since the sines and the cosines for the Givens rotations are derived analytically, rather than one numerically from the other through the relationship $s^2+c^2=1$, the rounding errors committed on a computer with finite-precision arithmetic are on the order of $\epsilon_{\rm mach}$ multiplied by the relative sizes of the components {\em independently}. The precise implementation is now discussed.

\begin{algorithm}\label{alg:FPE}
The computation of $s_n^m$ and $c_n^m$ is executed as follows:
\begin{enumerate}
\item Numerators and denominators are calculated in IEEE $64$-bit signed integer arithmetic. Since $m\le n$, the largest integers encountered in $s_n^m$ or $c_n^m$ are:
\[
(3n+3)(3n+4) = 9n^2+21n+12,
\]
in the denominators. The denominators, and consequently numerators, are exactly computed so long as $n \le 1,012,333,498$ (computed by solving for the largest integer satisfying $9n^2+21n+12 \le 2^{63}-1$);
\item Exactly computed numerators and denominators are converted to IEEE $64$-bit floating-point numbers. This conversion is lossless so long as $n \le 31,635,420$ (computed by solving for the largest integer satisfying $9n^2+21n+12 \le 2^{53}$, the largest representable integer by the floating-point number's significand);
\item For each sine and cosine, at most one rounding error bounded by one ulp is committed in dividing numerator by denominator; and,
\item For each sine and cosine, at most another rounding error bounded by one ulp is committed in computing the square root.
\end{enumerate}
\end{algorithm}
Due to memory restrictions, a bandlimit of $n = 31,635,420$ is unlikely to be surpassed in our lifetime\footnote{In case it were surpassed, then the algorithm above could easily be modified to compute individual ratios in $s_n^m$ and $c_n^m$ in $64$-bit floating-point rather than computing the full numerators and denominators. This modification would still be backward stable but involves a few more sources of rounding errors.}. Furthermore, there is neither underflow nor overflow. This ensures that an algorithm designed on the application of these Givens rotations is indeed $2$-normwise backward stable since component-wise bounds are tighter than bounds on the induced matrix $2$-norm. The result of Algorithm~\ref{alg:FPE} is that $\fl(G_n^{(m)}) = G_n^{(m)} + E$, where $\norm{E}_2 = \OO(\epsilon_{\rm mach})$.

Algorithm~\ref{alg:FPE} is also a backward stable implementation of left inversion (transposition) of matrices of connection coefficients by transposition of the product of Givens rotations. In fact, the Givens rotations correspond to a structured $QR$ factorization of $C^{(m)}$ and thus their use to solve a least-squares problem enjoys $2$-normwise relative backward stability since the condition number of $C^{(m)}$ is $1$. Following the text after Higham~\cite[Theorem 20.3]{Higham-02}, ``The conclusion is that, unless $A$ is very ill-conditioned, the residual $b - A\hat{x}$, [computed by a $2$-normwise relative backward stable $QR$ factorization of $A$], will not exceed the larger of the true residual $r = b - Ax$ and a constant multiple of the error in evaluating $\fl(r)$-a very satisfactory result.''

\subsection{The interpolative decomposition}

The interpolative decomposition (ID) of a rectangular matrix $A\in\R^{m\times n}$ factorizes the matrix into one whose $k$ columns constitute a subset of unique columns of $A$, and another containing the $k$-by-$k$ identity matrix as a subset and whose remaining entries are not too large. The following lemma justifies the approximation power of an ID for low-rank matrices.

\begin{lemma}~\cite{Cheng-Gimbutas-Martinsson-Rokhlin-26-1389-05,Liberty-et-al-104-20167-07,Tygert-229-6181-10}
Consider a rectangular matrix $A\in\R^{m\times n}$. Then, for any $k\in\N_0$ with $k\le \min\{m,n\}$, there exist $A_{\rm CS}\in\R^{m\times k}$ whose columns constitute a subset of the columns of $A$ and $A_{\rm I}\in\R^{k\times n}$ such that:
\begin{enumerate}
\item some subset of the columns of $A_{\rm I}$ makes up the $k\times k$ identity matrix;
\item $\max_{1\le i\le k,1\le j\le n} \abs{(A_{\rm I})_{i,j}} \le 2$;
\item the spectral norm of $A_{\rm I}$ satisfies $\norm{A_{\rm I}}_2 \le \sqrt{4k(n-k)+1}$;
\item the least singular value of $A_{\rm I}$ is at least $1$;
\item $A_{\rm CS} A_{\rm I} = A$ whenever $k = m$ or $k = n$; and,
\item when $k<\min\{m,n\}$, the spectral norm of $A - A_{\rm CS} A_{\rm I}$ satisfies:
\[
\norm{A - A_{\rm CS} A_{\rm I}}_2 \le \sqrt{4k(n-k)+1}\sigma_{k+1},
\]
where $\sigma_{k+1}$ is the $k+1^{\rm st}$ singular value of $A$.
\end{enumerate}
\end{lemma}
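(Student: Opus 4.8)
The plan is to obtain the interpolative decomposition directly from a strong rank-revealing $QR$ (RRQR) factorization of $A$ in the sense of Gu and Eisenstat, so that all six properties fall out of elementary manipulations of the triangular factor. First I would invoke the strong RRQR: for the given $k$ there exists a permutation matrix $\Pi\in\R^{n\times n}$ such that
\[
A\Pi = Q\begin{bmatrix} R_{11} & R_{12} \\ 0 & R_{22}\end{bmatrix},
\]
where $Q\in\R^{m\times m}$ is orthogonal, $R_{11}\in\R^{k\times k}$ is upper triangular and (when $\sigma_k(A)>0$) nonsingular, and the tuning parameter is fixed at $f=2$, so that every entry of $W := R_{11}^{-1}R_{12}$ satisfies $\abs{W_{i,j}}\le 2$ while $\sigma_1(R_{22})\le\sqrt{4k(n-k)+1}\,\sigma_{k+1}(A)$.

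From this I would read off the two factors. Writing $Q=[Q_1 \mid Q_2]$ with $Q_1\in\R^{m\times k}$, I set $A_{\rm CS}=Q_1 R_{11}$, which is exactly the block of the first $k$ columns of $A\Pi$ and hence a subset of the columns of $A$; and I set $A_{\rm I}=[I_k \mid W]\Pi^\top\in\R^{k\times n}$. Using $Q_1 R_{12}=A_{\rm CS}W$, the factorization then reads
\[
A = A_{\rm CS}A_{\rm I} + \begin{bmatrix} 0 & Q_2 R_{22}\end{bmatrix}\Pi^\top.
\]
Property~1 is immediate because the columns of $A_{\rm I}$ indexed by the pivot set reproduce $I_k$, and property~2 holds since the entries of $A_{\rm I}$ are each either $0$, $1$, or an entry of $W$.

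The spectral estimates would then reduce to the identity $[I_k \mid W][I_k \mid W]^\top = I_k + WW^\top$, whose eigenvalues are $1+\sigma_i(W)^2$; since $\Pi$ is orthogonal, $A_{\rm I}$ inherits these singular values. Thus $\sigma_{\min}(A_{\rm I})^2 = 1+\sigma_{\min}(W)^2\ge 1$ gives property~4, while $\norm{A_{\rm I}}_2^2 = 1 + \norm{W}_2^2 \le 1 + \norm{W}_F^2 \le 1 + 4k(n-k)$, using $\abs{W_{i,j}}\le 2$ on the $k\times(n-k)$ matrix $W$, gives property~3. For the residual, orthogonality of $Q_2$ and $\Pi$ yields $\norm{A - A_{\rm CS}A_{\rm I}}_2 = \norm{R_{22}}_2 = \sigma_1(R_{22})$, so property~6 is precisely the RRQR singular-value bound; and property~5 follows because the error block is vacuous when $k=m$ (no rows of $R_{22}$) or $k=n$ (no columns of $R_{12}$, $R_{22}$, whence $A_{\rm I}=\Pi^\top$ and $A_{\rm CS}=A\Pi$).

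The main obstacle is not in these verifications but in the strong RRQR itself: one must guarantee a column permutation for which the entries of $W=R_{11}^{-1}R_{12}$ stay bounded \emph{while} $\sigma_1(R_{22})$ remains controlled by $\sigma_{k+1}(A)$, two demands that pull in opposite directions under naive column pivoting. I would treat this as the cited input, since the construction --- a determinant-maximizing sequence of column interchanges whose termination and bounds are established through the interlacing of singular values --- is exactly the content of the Gu--Eisenstat algorithm and its analysis; with $f=2$ its guarantees specialize to the constants $2$ and $\sqrt{4k(n-k)+1}$ appearing above, and the rank-deficient case $\sigma_k(A)=0$ follows by a continuity argument.
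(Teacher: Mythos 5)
Your derivation is correct, and it is essentially the argument the paper is relying on: the paper states this lemma without proof, citing Cheng--Gimbutas--Martinsson--Rokhlin, Liberty et al., and Tygert, and those sources obtain the ID exactly as you do, from the Gu--Eisenstat strong rank-revealing $QR$ with $f=2$, which yields the entry bound $2$ and the factor $\sqrt{1+4k(n-k)}$ in properties 3 and 6, with properties 1, 2, 4, and 5 following from the block structure of $[\,I_k \mid W\,]\Pi^\top$ just as you write. The only point needing the care you already flag is the case $\sigma_k(A)=0$ (where $R_{11}$ may be singular), which is handled by perturbation/continuity since the bounds on $W$ and on $\sigma_1(R_{22})$ are uniform.
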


The subscript $_{\rm CS}$ stands for {\em column skeleton} and $_{\rm I}$ stands for {\em interpolation}.

This lemma states that $A \approx A_{\rm CS}A_{\rm I}$ provided that $\sigma_{k+1}$ is sufficiently small. For any $\varepsilon>0$, numerous disparate algorithms~\cite{Gu-Eisenstat-17-848-96,Cheng-Gimbutas-Martinsson-Rokhlin-26-1389-05,Liberty-et-al-104-20167-07} can identify the least $k$ such that $\norm{A - A_{\rm CS} A_{\rm I}}_2 \approx \varepsilon$. The algorithms compute both $A_{\rm CS}$ and $A_{\rm I}$ using at most $\OO(kmn\log n)$ operations, typically requiring only $\OO(kmn)$ operations. Furthermore, there is abundant empirical evidence that the ID is a numerically stable algorithm for computing (transposed) matrix-vector products.

\subsection{Threefold symmetry}

For improved performance, the threefold symmetry in the spherical harmonics is leveraged: negative-ordered layers are treated similarly to positive-ordered layers; even-ordered layers are necessarily treated differently than odd-ordered layers; and, the parity in the bases allows one to perfectly shuffle matrices of connection coefficients to remove the necessity to include storage for the chessboard pattern of zeros. This also halves the semiseparability rank.

Throughout, complexity results are stated for the transformation of all layers of the spherical harmonic transform in terms of the bandlimit $n$. Furthermore, some technical results are proved for even-ordered layers and omitted for odd-ordered layers in order to not overburden the reader and the exposition.

\section{Rapid algorithms for the conversion between bandlimited spherical harmonic expansions and their bivariate Fourier series}

The steps required by the algorithm are illustrated in Figure~\ref{fig:SHT}. At first, we derive a new algorithm that converts higher-order layers of the spherical harmonics into expansions with orders zero and one. Then, these coefficients are rapidly transformed into their Fourier coefficients.

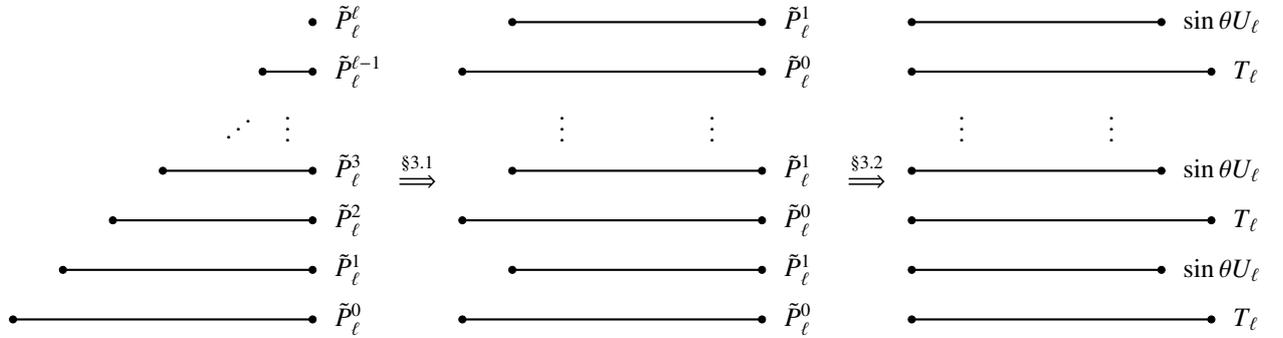
\begin{figure}[htbp]
\begin{center}
\begin{tikzpicture}[scale=0.657]
\draw[black, thick]
    (0,0) -- (6,0)
    (1,1) -- (6,1)
    (2,2) -- (6,2)
    (3,3) -- (6,3)
    (5,5) -- (6,5)
    ;
\filldraw[black]
    (0,0) circle (2pt)
    (6,0) circle (2pt)
    (1,1) circle (2pt)
    (6,1) circle (2pt)
    (2,2) circle (2pt)
    (6,2) circle (2pt)
    (3,3) circle (2pt)
    (6,3) circle (2pt)
    (5,5) circle (2pt)
    (6,5) circle (2pt)
    (6,6) circle (2pt)
    ;
\node (ellipsis1) at (4.5,4) {$\iddots$};
\node (ellipsis2) at (5.5,4) {$\vdots$};
\node[anchor=west] (zero) at (6.25,0) {$\tilde{P}_\ell^0$};
\node[anchor=west] (one) at (6.25,1) {$\tilde{P}_\ell^1$};
\node[anchor=west] (two) at (6.25,2) {$\tilde{P}_\ell^2$};
\node[anchor=west] (three) at (6.25,3) {$\tilde{P}_\ell^3$};
\node[anchor=west] (penultimate) at (6.25,5) {$\tilde{P}_\ell^{\ell-1}$};
\node[anchor=west] (ultimate) at (6.25,6) {$\tilde{P}_\ell^\ell$};
\draw[black, thick]
    (9,0) -- (15,0)
    (10,1) -- (15,1)
    (9,2) -- (15,2)
    (10,3) -- (15,3)
    (9,5) -- (15,5)
    (10,6) -- (15,6)
    ;
\filldraw[black]
    (9,0) circle (2pt)
    (15,0) circle (2pt)
    (10,1) circle (2pt)
    (15,1) circle (2pt)
    (9,2) circle (2pt)
    (15,2) circle (2pt)
    (10,3) circle (2pt)
    (15,3) circle (2pt)
    (9,5) circle (2pt)
    (15,5) circle (2pt)
    (10,6) circle (2pt)
    (15,6) circle (2pt)
    ;
\node (ellipsis1) at (11,4) {$\vdots$};
\node (ellipsis2) at (14,4) {$\vdots$};
\node[anchor=west] (zero) at (15.25,0) {$\tilde{P}_\ell^0$};
\node[anchor=west] (one) at (15.25,1) {$\tilde{P}_\ell^1$};
\node[anchor=west] (two) at (15.25,2) {$\tilde{P}_\ell^0$};
\node[anchor=west] (three) at (15.25,3) {$\tilde{P}_\ell^1$};
\node[anchor=west] (penultimate) at (15.25,5) {$\tilde{P}_\ell^0$};
\node[anchor=west] (ultimate) at (15.25,6) {$\tilde{P}_\ell^1$};
\draw[black, thick]
    (18,0) -- (24,0)
    (18,1) -- (23,1)
    (18,2) -- (24,2)
    (18,3) -- (23,3)
    (18,5) -- (24,5)
    (18,6) -- (23,6)
    ;
\filldraw[black]
    (18,0) circle (2pt)
    (24,0) circle (2pt)
    (18,1) circle (2pt)
    (23,1) circle (2pt)
    (18,2) circle (2pt)
    (24,2) circle (2pt)
    (18,3) circle (2pt)
    (23,3) circle (2pt)
    (18,5) circle (2pt)
    (24,5) circle (2pt)
    (18,6) circle (2pt)
    (23,6) circle (2pt)
    ;
\node (ellipsis1) at (19,4) {$\vdots$};
\node (ellipsis2) at (22,4) {$\vdots$};
\node[anchor=west] (zero) at (24.25,0) {$T_\ell$};
\node[anchor=west] (one) at (23.25,1) {$\sin\theta U_\ell$};
\node[anchor=west] (two) at (24.25,2) {$T_\ell$};
\node[anchor=west] (three) at (23.25,3) {$\sin\theta U_\ell$};
\node[anchor=west] (penultimate) at (24.25,5) {$T_\ell$};
\node[anchor=west] (ultimate) at (23.25,6) {$\sin\theta U_\ell$};
\node (firstarrow) at (8.1,3) {$\stackrel{\S \ref{subsection:butterfly}}{\Longrightarrow}$};
\node (secondarrow) at (17.1,3) {$\stackrel{\S \ref{subsection:FMM}}{\Longrightarrow}$};
\end{tikzpicture}
\caption{The spherical harmonic transform proceeds in two steps. Firstly, normalized associated Legendre functions are converted to normalized associated Legendre functions of order zero and one. Then, these intermediate expressions are re-expanded in trigonometric form. The first step proceeds with a butterfly factorization of the well-conditioned matrices of connection coefficients. The second step proceeds via the accelerated arithmetic of numerically approximating matrices of connection coefficients by hierarchically off-diagonal low-rank matrices.}
\label{fig:SHT}
\end{center}
\end{figure}

\subsection{The butterfly algorithm}\label{subsection:butterfly}

Suppose that $n$ is a positive integer, and $A$ is an $n\times n$ matrix. Suppose further that $\varepsilon$ and $C$ are positive real numbers, and $k$ is a positive integer, such that any contiguous rectangular subblock of $A$ containing at most $Cn$ entries can be approximated to precision $\varepsilon$ by a matrix whose rank is $k$ (using the Frobenius/Hilbert--Schmidt norm to measure the accuracy of the approximation); we will refer to this hypothesis as ``{\em the rank property}.''

The butterfly algorithm is an abstraction of the algebraic properties of divide-and-conquer fast Fourier transforms without their analytic properties. It is ideally suited for matrices resulting from the discretization of an integral operator with kernel $e^{\ii xy}$. This bivariate function is known to generate matrices which satisfy the aforementioned {\em rank property}, whereby the numerical rank of any subblock is proportional to the area in the $xy$-plane from which entries are derived.

We refer the interested reader to~\cite{Michielssen-Boag-44-1086-96,ONeil-Woolfe-Rokhlin-28-203-10} for a detailed description of the butterfly algorithm. The following synopsis assumes the aforementioned {\em rank property}, and is almost identical to the synopsis given in~\cite{Tygert-229-6181-10}. Notwithstanding, the butterfly algorithm is more generally applicable to other compressible matrices where the rank structure is unknown~\cite{Tygert-229-6181-10}, less clear, or different.

The running-time of the algorithm will be proportional to $k^2/C$; taking $C$ to be roughly proportional to $k$ suffices for many matrices of interest, so ideally $k$ should be small. We will say that two matrices $G$ and $H$ are equal to precision $\varepsilon$, denoted $G\approx H$, to mean that the spectral norm of $G-H$ is $\OO(\varepsilon)$.

We now explicitly use the rank property for subblocks of multiple heights, to illustrate the basic structure of the butterfly algorithm. Consider any two adjacent contiguous rectangular subblocks $L$ and $R$ of $A$, each containing at most $Cn$ entries and having the same number of rows, with $L$ on the left and $R$ on the right. Due to the rank property, there exists IDs:
\[
L \approx L_{\rm CS} L_{\rm I},\quad{\rm and}\quad R \approx R_{\rm CS} R_{\rm I},
\]
where the $k$ columns of $L_{\rm CS}$ are a subset of the columns of $L$ and the $k$ columns of $R_{\rm CS}$ are a subset of the columns of $R$, and where all entries in $L_{\rm I}$ and $R_{\rm I}$ have absolute value at most $2$.

Next, we merge the matrices $L$ and $R$ and split the columns of the result approximately in half, obtaining $T$ on top and $B$ on the bottom:
\[
\begin{pmatrix} L & R\end{pmatrix} = \begin{pmatrix} T\\B\end{pmatrix}.
\]
Observe that the matrices $T$ and $B$ each have at most $Cn$ entries (since $L$ and $R$ each have at most $Cn$ entries). Similarly, we merge the matrices $L_{\rm CS}$ and $R_{\rm CS}$ and split the columns of the result approximately in half, obtaining $T^{(1)}$ and $B^{(1)}$:
\[
\begin{pmatrix} L_{\rm CS} & R_{\rm CS}\end{pmatrix} = \begin{pmatrix} T^{(1)}\\B^{(1)}\end{pmatrix}.
\]
As the columns of $L_{\rm CS}$ and $R_{\rm CS}$ are columns of the original matrix, so too are the $2k$ columns of $T^{(1)}$ and $B^{(1)}$. Hence, due to the rank property, there exist IDs:
\[
T^{(1)} \approx T_{\rm CS}^{(1)}T_{\rm I}^{(1)},\quad{\rm and}\quad B^{(1)} \approx B_{\rm CS}^{(1)}B_{\rm I}^{(1)}.
\]
Combining this multilevel approximation, the top matrix $T$ and the bottom matrix $B$ are approximately:
\[
T \approx T_{\rm CS}^{(1)}T_{\rm I}^{(1)} \begin{pmatrix} L_{\rm I} & 0\\ 0 & R_{\rm I}\end{pmatrix},\quad{\rm and}\quad B \approx B_{\rm CS}^{(1)}B_{\rm I}^{(1)} \begin{pmatrix} L_{\rm I} & 0\\ 0 & R_{\rm I}\end{pmatrix}.
\]
If we use $m$ to denote the number of rows in $L$ and $R$, then the number of columns in $L$ and $R$ is at most $Cn/m$, and so the total number of entries in the matrices in the right-hand sides of the first set of IDs can be as large as $2mk+2k(Cn/m)$, whereas the total number of nonzero entries in the matrices in the right-hand sides of the bi-level representation is at most $mk+4k^2+2k(Cn/m)$. If $m$ is nearly as large as possible -- nearly $n$ -- and $k$ and $C$ are much smaller than $n$, then $mk+4k^2+2k(Cn/m)$ is about half $2mk+2k(Cn/m)$. Thus, the bi-level representation is more efficient than that provided by the single-level representation, in terms of the complexity of both storage and matrix arithmetic. This gain in efficiency is due to the rank property holding for subblocks of multiple heights.

Naturally, we may repeat this process of merging adjacent blocks and splitting in half the columns of the result, updating the compressed representations after every split. We start by partitioning $A$ into blocks each dimensioned $n\times \lfloor C\rfloor$ (except possibly for the rightmost block, which may have fewer than $\lfloor C \rfloor$ columns), and then recursively group unprocessed blocks into disjoint pairs, processing these pairs by merging and splitting them into new, unprocessed blocks having fewer rows. The resulting multilevel representation of $A$ allows us to apply $A$ and $A^\top$ with precision $\varepsilon$ using just $\OO((k^2/C)n\log n)$ operations. This is due to the fact that there are $\OO(\log n)$ levels in the representation and each level except the last will only involve $\OO(n/C)$ interpolation matrices of dimensions $k\times (2k)$.

In practice, it is crucial to use a compact form for the interpolation matrices where the embedded identity is applied as a column permutation, and to ensure that the relative tolerance of $\varepsilon \max\{m,n\}$ is used to determine the numerical rank, where $m$ and $n$ denote the dimensions of the current subblock under compression.

\subsubsection{Interpretation as Fourier integral operators}

Due to the semiseparability of the matrices of connection coefficients, the analytic rank structure of contiguous subblocks above the $2m^{\rm th}$ subdiagonal is at most $2m$; straddling this subdiagonal, the ranks of the subblocks are undetermined; and any subblock that is well below the subdiagonal has rank $0$. Using only analytic rank structure, it appears that the butterfly algorithm may be ineffective when $m$ is large. However, the analytic structure overlooks the potential for decay in the singular values of the contiguous subblocks in the semiseparable region. To elucidate this numerical rank structure, we interpret the connection coefficients as Fourier integral operators.

A rank-$1$ (bivariate) function is separable: if $f_1(x,y) = g(x)h(y)$, then $\rank(f_1) = 1$.

Similarly, a rank-$k$ (bivariate) function is a non-trivial sum of $k$ rank-$1$ functions.

\begin{definition}
A function $f\in L^p([-1,1]^2)$ is said to have rank $k_\varepsilon$ to precision $\varepsilon>0$ if:
\[
k_\varepsilon = \inf_{k\in\N_0}\left\{\inf_{\rank(f_k)\le k}\norm{f-f_k}_p \le \varepsilon \norm{f}_p\right\}.
\]
\end{definition}

\begin{definition}
Let $A: L^2([-1,1])\to L^2([-1,1])$ be the integral operator with kernel $f \in L^2([-1,1]^2)$, given by:
\[
A\{u\}(x) = \int_{-1}^1 f(x,y)u(y)\ud y.
\]
The operator $A$ is said to have rank $k_\varepsilon$ to precision $\varepsilon>0$ if $f$ has rank $k_\varepsilon$ to the same precision.
\end{definition}

We recall a lemma from~\cite{Landau-Pollak-41-1295-62} appearing in a similar form in~\cite{ONeil-Woolfe-Rokhlin-28-203-10}.

\begin{lemma}\label{lemma:LowRankFIO}
Suppose that $\delta$, $\varepsilon$, and $\gamma$ are positive real numbers and $\varepsilon<1$. Suppose further that the operator $F: L^2([-1,1])\to L^2([-1,1])$ is given by the formula:
\[
F\{h\}(x) = \int_{-1}^1 e^{\ii\gamma xy}h(y)\ud y.
\]
Then $F$ has rank to precision $\varepsilon$ at most:
\[
k(= k(\epsilon,\delta,\gamma)) = (1+\delta)\left(\dfrac{2\gamma}{\pi} + \dfrac{E}{\delta}\right) + 3,
\]
where:
\[
E(=E(\epsilon,\delta)) = 2\sqrt{2\ln\left(\dfrac{4}{\varepsilon}\right)\ln\left(\dfrac{6\sqrt{1/\sqrt{\delta}+\sqrt{\delta}}}{\varepsilon}\right)}.
\]
\end{lemma}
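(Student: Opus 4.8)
The plan is to reduce the statement to the spectral analysis of the time- and band-limiting (prolate spheroidal) operator, for which sharp eigenvalue estimates are available. Working in the Hilbert--Schmidt case $p=2$, where truncating the singular value decomposition furnishes the optimal low-rank approximation, the rank to precision $\varepsilon$ of the kernel $f(x,y) = e^{\ii\gamma xy}$ is the least $k$ for which the tail of the singular values $\sigma_0\ge\sigma_1\ge\cdots$ of $F$ satisfies $\sum_{n>k}\sigma_n^2 \le \varepsilon^2\norm{f}_2^2$. Since $\norm{f}_2^2 = 4$, the first step is to form the self-adjoint operator $F^\ast F$, whose kernel is
\[
\int_{-1}^1 e^{-\ii\gamma x y}e^{\ii\gamma x y'}\ud x = \frac{2\sin\left(\gamma(y'-y)\right)}{\gamma(y'-y)},
\]
the rescaled Dirichlet/sinc kernel. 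Its eigenvalues are exactly $\sigma_n^2$, and recognizing it as $\tfrac{2\pi}{\gamma}$ times Slepian's prolate operator with band parameter $c=\gamma$ identifies $\sigma_n^2 = \tfrac{2\pi}{\gamma}\lambda_n(\gamma)$, where $\lambda_0(\gamma)\ge\lambda_1(\gamma)\ge\cdots$ are the prolate eigenvalues in $(0,1)$.

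With this identification the requirement $\sum_{n>k}\sigma_n^2 \le 4\varepsilon^2$ becomes $\sum_{n>k}\lambda_n(\gamma)\le \tfrac{2\gamma}{\pi}\varepsilon^2$, while the trace identity $\sum_n\lambda_n(\gamma) = \tfrac{2\gamma}{\pi}$ (the Shannon number) shows this is precisely the condition that the normalized eigenvalue tail drop below $\varepsilon^2$. The second and essential step is therefore to invoke the Landau--Pollak description of the eigenvalue profile: $\lambda_n(\gamma)$ stays near $1$ for $n$ below the Shannon number $\tfrac{2\gamma}{\pi}$, collapses through a narrow plunge region, and then decays super-exponentially. The sharp quantitative control comes from the second-order differential operator $\frac{\ud}{\ud x}(1-x^2)\frac{\ud}{\ud x} - \gamma^2 x^2$ that commutes with the sinc kernel; its eigenfunctions are the prolate spheroidal wave functions and its well-separated eigenvalues pin down both the location and the width of the plunge.

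To extract the stated closed form I would introduce the free buffer parameter $\delta>0$, counting the bulk contribution up to index $(1+\delta)\tfrac{2\gamma}{\pi}$ and bounding the residual plunge-and-tail contribution by a term of the shape $\tfrac{E}{\delta}$, where the double logarithm $E(=E(\varepsilon,\delta))$ encodes the product of the dependence on the target accuracy $\ln(4/\varepsilon)$ and on the effective bandwidth $\ln(6\sqrt{1/\sqrt\delta+\sqrt\delta}/\varepsilon)$. Choosing the constants so that the combined tail sum falls below $\tfrac{2\gamma}{\pi}\varepsilon^2$, and rounding up with the additive $3$ to absorb indexing and a handful of plunge eigenvalues, yields $k = (1+\delta)\pr(\tfrac{2\gamma}{\pi} + \tfrac{E}{\delta}) + 3$; the parameter $\delta$ then remains available to trade the inflation of the Shannon term against the size of the plunge correction.

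The main obstacle is the sharp estimate of the plunge region rather than the leading-order count: the bound $\tfrac{2\gamma}{\pi}$ is the classical Shannon number and follows immediately from the trace, but the double-logarithmic $E$ requires the fine eigenvalue asymptotics of Landau--Pollak (equivalently, Landau--Widom counting of eigenvalues in $[\varepsilon^2,1-\varepsilon^2]$), which is considerably more delicate than the trace argument and is where essentially all of the analytic work resides. A subsidiary point is justifying the reduction for general $p$; since for $p\ne 2$ the singular value truncation is no longer optimal, I would bound the $L^p$ error by the $L^2$ error up to the fixed finite measure of $[-1,1]^2$, which only perturbs the constants already absorbed into $E$ and the additive $3$.
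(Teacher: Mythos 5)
First, a point of reference: the paper does not prove this lemma at all --- it is explicitly \emph{recalled} from Landau and Pollak \cite{Landau-Pollak-41-1295-62}, in the form restated by O'Neil, Woolfe and Rokhlin \cite{ONeil-Woolfe-Rokhlin-28-203-10}. So there is no in-paper argument to compare against; the relevant comparison is with the cited source. Your reduction is the correct one and faithfully identifies where the result lives: forming $F^\ast F$, obtaining the rescaled sinc kernel $2\sin(\gamma(y'-y))/(\gamma(y'-y))$, identifying $\sigma_n^2 = \tfrac{2\pi}{\gamma}\lambda_n(\gamma)$ with the prolate eigenvalues, and reading off the Shannon number $\tfrac{2\gamma}{\pi}$ from the trace are all right, and the qualitative bulk/plunge/tail picture is exactly the mechanism behind the lemma.

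The genuine gap is that the decisive quantitative step is never carried out: the passage from ``invoke the Landau--Pollak eigenvalue profile'' to the specific closed form $E = 2\sqrt{2\ln(4/\varepsilon)\ln(6\sqrt{1/\sqrt{\delta}+\sqrt{\delta}}/\varepsilon)}$ and the additive $3$ \emph{is} the lemma, and ``choosing the constants so that the combined tail sum falls below $\tfrac{2\gamma}{\pi}\varepsilon^2$'' restates the conclusion rather than deriving it. Two concrete issues lurk here. First, the Landau--Widom counting you propose as the quantitative engine gives a plunge width of order $\log\gamma\,\log(1/\varepsilon)$, which is $\gamma$-dependent; reconciling that with the stated $\gamma$-independent $E$ requires showing that the $\delta\cdot\tfrac{2\gamma}{\pi}$ slack in $(1+\delta)\tfrac{2\gamma}{\pi}$ absorbs the $\gamma$-dependence, and in any case Landau--Widom is an asymptotic statement, not the non-asymptotic bound with explicit constants that the lemma asserts; the actual Landau--Pollak argument proceeds differently (via approximation of time-limited, essentially band-limited signals by the first prolates). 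Second, your Hilbert--Schmidt reading of ``rank to precision'' (tail of $\sum\sigma_n^2$ against $\varepsilon^2\norm{f}_2^2$) is one legitimate interpretation of the paper's $L^p$ definition with $p=2$, but the cited result controls a somewhat different quantity, and the constants in $E$ are tied to that formulation; waving the discrepancy into ``constants already absorbed into $E$'' is not justified when $E$ is prescribed exactly. In short: correct road map to the literature, but as a proof it stops precisely where the analytic content begins.
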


This result means that if the Fourier transform with kernel $e^{\ii kx}$ is restricted to a rectangle in the $kx$-plane, then its rank is bounded by a constant times the area of the rectangle.

For $m\in\N$, the connection coefficients between $\tilde{P}_{\ell+2m}^{2m}$ and $\tilde{P}_n^0$ are given by the inner product:
\begin{equation}\label{eq:intrepcoefficients}
c_{\ell,n}^{2m} = \int_{-1}^1 \tilde{P}_{\ell+2m}^{2m}(x) \tilde{P}_n^0(x)\ud x.
\end{equation}
Using the Fourier transform of normalized Legendre polynomials~\cite[\S 7.243 5.]{Gradshteyn-Ryzhik-07}:
\begin{equation}\label{eq:PFT}
\int_{-1}^1 e^{-\ii k x} \tilde{P}_n^0(x) \ud x = 2(-\ii)^n \sqrt{n+\tfrac{1}{2}} j_n(k),
\end{equation}
we may represent the connection coefficients as the composition of a spherical Bessel integral operator and a Fourier integral operator:
\begin{equation}\label{eq:IFTcoefficients}
c_{\ell,n}^{2m} = \dfrac{(-\ii)^n\sqrt{n+\tfrac{1}{2}}}{\pi}\int_\R j_n(k)\ud k \int_{-1}^1 e^{\ii kx} \tilde{P}_{\ell+2m}^{2m}(x) \ud x,
\end{equation}
where the order of integration has been reversed. Eq.~\eqref{eq:IFTcoefficients} clearly shows how a subblock of connection coefficients with indices among $\{\ell\}\times \{n\}$ may be represented as the composition of linear operators involving the indices $n$ and variable $k$, variables $k$ and $x$, and variable $x$ and indices $\ell$. As the rank of the composed operator is bounded by the smallest rank in the composition, the relation to a Fourier integral operator is of paramount importance.

In this integral operator setting, we would need to identify bounds on the relevant wavenumbers to invoke Lemma~\ref{lemma:LowRankFIO}. Before we proceed, we prove a technical lemma.

\begin{lemma}\label{lemma:4F3sum}
For integers $\ell$ and $m$, the summation:
\[
\sum_{j=0}^{[\ell/2]}(2m+1+2\ell-4j)\dfrac{\Gamma(\ell+2m-j+\frac{1}{2})}{\Gamma(\ell+m-j+\frac{3}{2})}\dfrac{(m)_j}{j!}\dfrac{\Gamma(\ell+2m-2j+1)}{\Gamma(\ell-2j+1)} = \dfrac{2\sqrt{\pi}\Gamma(\ell+4m+1)}{16^m\Gamma(m+\frac{1}{2})\Gamma(\ell+1)}.
\]
\end{lemma}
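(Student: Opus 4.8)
The plan is to identify the left-hand side as a single terminating very-well-poised hypergeometric series and to evaluate it with a classical summation theorem. Denote the $j$-th summand by $t_j$. First I would form the term ratio $t_{j+1}/t_j$, which factors into a rational function of $j$:
\[
\dfrac{t_{j+1}}{t_j} = \dfrac{(\ell+m+\tfrac12)-2j-2}{(\ell+m+\tfrac12)-2j}\cdot\dfrac{\ell+m-j+\tfrac12}{\ell+2m-j-\tfrac12}\cdot\dfrac{m+j}{j+1}\cdot\dfrac{(\ell-2j)(\ell-2j-1)}{(\ell+2m-2j)(\ell+2m-2j-1)}.
\]
The first factor is the \emph{very-well-poised} signature, i.e.\ the contribution of a Pochhammer pair $\{1+\tfrac a2,\tfrac a2\}$ with $a=-\ell-m-\tfrac12$; the quadratic factor collapses, via the duplication identity $(x)_{2j}=4^j(\tfrac x2)_j(\tfrac{x+1}{2})_j$, into the ratio $(-\ell)_{2j}/(-\ell-2m)_{2j}$ up to the constant $\Gamma(\ell+2m+1)/\Gamma(\ell+1)$. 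Reading off all numerator and denominator parameters from this ratio shows that, after dividing by $t_0$, the sum is a terminating $\pFq{5}{4}$ evaluated at $1$.

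Next I would match the parameters to Dougall's very-well-poised $\pFq{5}{4}$ summation theorem,
\[
\pFq{5}{4}\!\left[\begin{matrix} a,\ 1+\tfrac a2,\ b,\ c,\ d\\ \tfrac a2,\ 1+a-b,\ 1+a-c,\ 1+a-d\end{matrix};1\right] = \dfrac{\Gamma(1+a-b)\Gamma(1+a-c)\Gamma(1+a-d)\Gamma(1+a-b-c-d)}{\Gamma(1+a)\Gamma(1+a-b-c)\Gamma(1+a-b-d)\Gamma(1+a-c-d)},
\]
with the identification
\[
a = -\ell-m-\tfrac12,\quad b = m,\quad c = -\tfrac{\ell}{2},\quad d = \tfrac{1-\ell}{2}.
\]
One checks that the three well-poised denominator parameters $1+a-b$, $1+a-c$, $1+a-d$ reproduce precisely the quantities $-\ell-2m+\tfrac12$, $-\tfrac{\ell+2m}{2}$, $\tfrac{1-\ell-2m}{2}$ arising from the second and fourth factors of the term ratio, and that either $c$ or $d$ is a non-positive (half-)integer forcing termination at $j=[\ell/2]$. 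Applying the theorem then reduces the sum to the displayed quotient of eight Gamma functions.

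The final step is to combine this quotient with the prefactor $t_0$ and simplify to the stated right-hand side. As a running sanity check I would use the Legendre duplication formula $\Gamma(m+\tfrac12)=\sqrt{\pi}\,(2m)!/(4^m m!)$, which rewrites the target as the clean product $2\,m!\,(\ell+4m)!/(4^m(2m)!\,\ell!)$; matching the Gamma quotient against this expression at the relevant half-integer and integer arguments confirms the closed form.

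The main obstacle is the bookkeeping of the half-integer and quadratic ($\ell/2$-type) parameters: one must track the duplication splittings correctly, verify the exact well-poised pairings so that Dougall's theorem genuinely applies, and push the eightfold Gamma simplification through the many half-integer arguments. To certify the result independently of this delicate matching, I would also run creative telescoping: normalise $t_j$ by the right-hand side and apply Zeilberger's algorithm in $\ell$, producing a first-order recurrence together with a rational certificate $R(j)$ for which $t_j = G(j+1)-G(j)$ with $G(j)=R(j)t_j$. The definite sum then telescopes to its boundary values, and only the base cases $\ell=0,1$ need be verified by hand; this mechanical route sidesteps the parameter matching entirely.
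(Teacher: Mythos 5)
Your route is genuinely different from the paper's. The paper does not touch hypergeometric summation at all: it takes the Gegenbauer connection formula (DLMF 18.18.16), which expands $C_\ell^{(2m+\frac12)}$ in the $C_{\ell-2j}^{(m+\frac12)}$ with exactly the coefficients appearing in the summand, evaluates both sides at $x=1$ using $C_\ell^{(\lambda)}(1)=(2\lambda)_\ell/\ell!$, and finishes with the single Gamma identity $\Gamma(2m+\frac12)\Gamma(2m+2)/(\Gamma(m+\frac32)\Gamma(4m+1))=2\sqrt{\pi}/(16^m\Gamma(m+\frac12))$ -- three lines, leaning entirely on a citable formula. You instead re-derive the evaluation from scratch as a terminating very-well-poised $\pFq{5}{4}(1)$; I checked your term ratio and parameter identification ($a=-\ell-m-\frac12$, $b=m$, $c=-\ell/2$, $d=(1-\ell)/2$, with the duplication splitting of the quadratic factors) and they are correct, and the resulting evaluation reproduces the right-hand side (verified at $\ell=0,1,2$). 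What the paper's route buys is brevity and no summation theorem; what yours buys is self-containedness, since DLMF 18.18.16 is itself classically a consequence of Dougall-type summations, so you are in effect unrolling the proof of the cited ingredient.

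One concrete caveat: the Gamma-quotient form of Dougall's theorem that you display cannot be applied literally here. With your parameters, $1+a-c-d=-m$ and $1+a-b-c-d=-2m$ (and, for even $\ell$, also $1+a-d=-\frac{\ell+2m}{2}$ and $1+a-b-d=-\frac{\ell+4m}{2}$) are non-positive integers, so the right-hand side is an $\infty/\infty$ indeterminate form, and moreover the convergence condition $\Re(1+a-b-c-d)>0$ of the non-terminating theorem fails. You must use the terminating form, e.g.\ with $c=-n$, $n=[\ell/2]$,
\[
\pFq{5}{4}\!\left[\begin{matrix} a,\ 1+\tfrac a2,\ b,\ d,\ -n\\ \tfrac a2,\ 1+a-b,\ 1+a-d,\ 1+a+n\end{matrix};1\right] = \dfrac{(1+a)_n\,(1+a-b-d)_n}{(1+a-b)_n\,(1+a-d)_n},
\]
and then combine with $t_0$. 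This is a routine repair (it gives the correct answer; I checked $\ell=2$), but as written the ``displayed quotient of eight Gamma functions'' step would fail. Your Zeilberger fallback is a legitimate independent certification and sidesteps this entirely.
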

\begin{proof}
Representing $C_\ell^{(2m+\frac{1}{2})}(x)$ in the basis of $C_j^{(m+\frac{1}{2})}(x)$ via~\cite[\S 18.18.16]{Olver-et-al-NIST-10}:
\begin{equation}\label{eq:Cjmh2Cl2mh}
C_\ell^{(2m+\frac{1}{2})}(x) = \sum_{j=0}^{[\ell/2]} \dfrac{2m+1+2\ell-4j}{2m+1}\dfrac{(2m+\frac{1}{2})_{\ell-j}}{(m+\frac{3}{2})_{\ell-j}}\dfrac{(m)_j}{j!}C_{\ell-2j}^{(m+\frac{1}{2})}(x),
\end{equation}
the summation is obtained by using:
\[
C_\ell^{(2m+\frac{1}{2})}(1) = \dfrac{(4m+1)_\ell}{\ell!},\quad{\rm and}\quad \dfrac{\Gamma(2m+\frac{1}{2})\Gamma(2m+2)}{\Gamma(m+\frac{3}{2})\Gamma(4m+1)} = \dfrac{2\sqrt{\pi}}{16^m\Gamma(m+\frac{1}{2})}.
\]
\end{proof}

\begin{theorem}\label{theorem:finitek}
Let:
\[
k_1(n,\varepsilon) := 2\left(\dfrac{\varepsilon}{2}\sqrt{\frac{\pi}{2n+1}}(n+1)\Gamma(n+\tfrac{3}{2})\right)^{\frac{1}{n+1}},
\]
and let:
\[
k_2(\ell,m,n,\varepsilon) := \dfrac{1}{8}\left(\dfrac{2}{\varepsilon}\sqrt{\frac{2n+1}{\pi}}\sqrt{\frac{(2\ell+4m+1)\Gamma(\ell+4m+1)}{\Gamma(\ell+1)}}\dfrac{1}{m\Gamma(m+\frac{1}{2})}\right)^{\frac{1}{m}}.
\]
Then only integration over $k_1(n,\varepsilon) \le \abs{k} \le k_2(\ell,m,n,\varepsilon)$ contributes to Eq.~\eqref{eq:IFTcoefficients} to precision $\varepsilon>0$.
\end{theorem}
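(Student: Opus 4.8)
The plan is to split the real line in Eq.~\eqref{eq:IFTcoefficients} as $\int_\R = \int_{\abs{k}<k_1} + \int_{k_1\le\abs{k}\le k_2} + \int_{\abs{k}>k_2}$ and to show that the first and third integrals are each $\OO(\varepsilon)$, so that only the middle range contributes to precision $\varepsilon$. The two cut-offs arise from the complementary behaviour of the two factors in the integrand: the spherical Bessel function $j_n(k)$ vanishes to order $n$ at the origin and hence kills the small-$\abs{k}$ region, whereas the inner Fourier integral $I(k):=\int_{-1}^1 e^{\ii kx}\tilde P_{\ell+2m}^{2m}(x)\ud x$ decays for large $\abs{k}$ and hence kills the large-$\abs{k}$ region. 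Accordingly I would use a different bound on each factor in each regime.

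For the lower cut-off $k_1$, Poisson's integral representation $j_n(k)=\frac{k^n}{2^{n+1}n!}\int_{-1}^1(1-t^2)^n e^{\ii kt}\ud t$ yields the uniform bound $\abs{j_n(k)}\le\abs{k}^n/(2n+1)!!$, while Cauchy--Schwarz together with the $L^2[-1,1]$-normalisation of $\tilde P_{\ell+2m}^{2m}$ gives $\abs{I(k)}\le\norm{e^{\ii k\cdot}}_2\,\norm{\tilde P_{\ell+2m}^{2m}}_2=\sqrt2$. Inserting both into the prefactor $\sqrt{n+\frac12}/\pi$ and integrating over $\abs{k}<k_1$ leaves an elementary integral $\int_{-k_1}^{k_1}\abs{k}^n\ud k=2k_1^{n+1}/(n+1)$; after substituting $(2n+1)!!=2^{n+1}\Gamma(n+\frac32)/\sqrt\pi$ and setting the result equal to $\varepsilon$, solving for $k_1$ reproduces the stated formula exactly.

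The upper cut-off $k_2$ is the crux. Here I would expand $I(k)$ in Bessel functions and resum the coefficients with Lemma~\ref{lemma:4F3sum}. By Eq.~\eqref{eq:ALPtoUS}, $\tilde P_{\ell+2m}^{2m}(x)$ is a fixed normalisation constant times $(1-x^2)^m C_\ell^{(2m+\frac12)}(x)$; the obstruction is that the weight $(1-x^2)^m$ does \emph{not} match the natural Gegenbauer weight $(1-x^2)^{2m}$ of $C_\ell^{(2m+\frac12)}$, so no single Bessel function results. The remedy -- and the reason Lemma~\ref{lemma:4F3sum} was proved -- is to re-expand $C_\ell^{(2m+\frac12)}$ in the family $\{C_{\ell-2j}^{(m+\frac12)}\}$ via Eq.~\eqref{eq:Cjmh2Cl2mh}, whose members \emph{do} carry the weight $(1-x^2)^m$. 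Each term is then a standard Gegenbauer--Fourier integral,
\[
\int_{-1}^1 e^{\ii kx}(1-x^2)^m C_{\ell-2j}^{(m+\frac12)}(x)\ud x=\ii^{\ell-2j}\frac{\pi\,2^{\frac12-m}\,\Gamma(\ell-2j+2m+1)}{(\ell-2j)!\,\Gamma(m+\frac12)}\,\frac{J_{\ell-2j+m+\frac12}(k)}{k^{m+\frac12}},
\]
so that $I(k)=k^{-m-\frac12}\sum_j(\cdots)J_{\ell-2j+m+\frac12}(k)$. Bounding every Bessel function by $\abs{J_\nu(k)}\le1$ ($\nu\ge0$) pulls the Bessel factors out of the sum, and the remaining coefficient sum is, after converting Pochhammer symbols to Gamma functions, exactly the left-hand side of Lemma~\ref{lemma:4F3sum} up to the constant $\Gamma(m+\frac32)/[(2m+1)\Gamma(2m+\frac12)]$; substituting its closed form gives $\abs{I(k)}\le C_I\abs{k}^{-m-\frac12}$ with an explicit $C_I$ built from the stated $\Gamma$-ratios.

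Finally I would estimate the tail. Using $\abs{j_n(k)}\le\sqrt{\pi/(2\abs{k})}$ (again from $\abs{J_{n+\frac12}}\le1$) together with $\abs{I(k)}\le C_I\abs{k}^{-m-\frac12}$, the integrand over $\abs{k}>k_2$ is $\OO(\abs{k}^{-m-1})$, whose tail integral $\int_{k_2}^\infty\abs{k}^{-m-1}\ud k=k_2^{-m}/m$ converges precisely because $m\in\N$ is positive. Multiplying by the prefactor and the explicit $C_I$, simplifying $\tfrac{2}{\pi}\sqrt{n+\tfrac12}\,\sqrt{\pi/2}=\sqrt{(2n+1)/\pi}$, setting the bound equal to $\varepsilon$, and taking the $m$-th root recovers $k_2$ (the $8^m$ inside the bracket produces the prefactor $\tfrac18$, and the $\tfrac1m$ appears from the tail integral). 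The main obstacle is therefore the weight mismatch in the third paragraph, which forces the Gegenbauer re-expansion and makes Lemma~\ref{lemma:4F3sum} indispensable for summing the Bessel coefficients in closed form; the accompanying constant-chasing with half-integer $\Gamma$-values and powers of two is delicate but routine.
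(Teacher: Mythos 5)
Your proposal follows the paper's proof essentially step for step: the same Cauchy--Schwarz bound on the inner integral together with the small-$k$ power bound $\abs{j_n(k)}\le\frac{\sqrt{\pi}}{2\Gamma(n+\frac32)}\abs{k/2}^n$ produces $k_1$, and the same re-expansion of $C_\ell^{(2m+\frac12)}$ via Eq.~\eqref{eq:Cjmh2Cl2mh}, resummed with Lemma~\ref{lemma:4F3sum} and fed into the tail integral $\int_{\abs{k}>k_2}\abs{k}^{-m-1}\ud k = 2k_2^{-m}/m$, produces $k_2$. The only (cosmetic) deviation is that the paper bounds the spherical Bessel factors by $\abs{j_\nu(k)}\le\abs{k}^{-1/2}$ rather than your $\abs{J_\nu(k)}\le1$ (equivalently $\abs{j_\nu(k)}\le\sqrt{\pi/(2\abs{k})}$), and it is this slightly sharper constant that makes the estimate land exactly on the stated formula for $k_2$; with your bound the cutoff would be inflated by a harmless factor of $(\pi/2)^{1/m}$.
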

\begin{proof}
The absolute value of the connection coefficients is bounded by:
\[
\abs{c_{\ell,n}^{2m}} \le \abs{\dfrac{\sqrt{n+\tfrac{1}{2}}}{\pi}\int_\R j_n(k)\ud k\abs{\int_{-1}^1 e^{\ii kx} \tilde{P}_{\ell+2m}^{2m}(x) \ud x}}.
\]
Using the Cauchy--Schwarz inequality, $|\langle e^{-\ii kx}, \tilde{P}_{\ell+2m}^{2m}\rangle| \le \sqrt{2}$, and thus:
\begin{align*}
\abs{c_{\ell,n}^{2m}} & \le \abs{\dfrac{\sqrt{2n+1}}{\pi}\int_\R j_n(k)\ud k}.
\end{align*}
By the inequality:
\[
\abs{j_n(k)} \le \frac{\sqrt{\pi}}{2\Gamma(n+\frac{3}{2})}\abs{\frac{k}{2}}^n,\quad\forall n\in\N_0,~k\in\R,
\]
we may readily ascertain the equality:
\[
\abs{\dfrac{\sqrt{2n+1}}{\pi}\int_{-k_1}^{k_1} j_n(k)\ud k} = \varepsilon.
\]
Furthermore, an upper bound on $\abs{k}$ may be derived from the fact that the regularity of $\tilde{P}_{\ell+2m}^{2m}(x)\chi_{[-1,1]}(x)$ on the real line dictates the decay rate of its Fourier transform, where $\chi_{[-1,1]}(x)$ is the characteristic function on the interval $[-1,1]$. As $m$ increases, the polynomial decay to $0$ at $\pm1$ from the interior of the interval $[-1,1]$ matches $0$ from the exterior of the interval to $m^{\rm th}$ degree, increasing the regularity proportionally.

In particular, using the Fourier transform~\cite[\S 7.321]{Gradshteyn-Ryzhik-07}:
\[
\int_{-1}^1 e^{\ii kx}C_\ell^{(m+\frac{1}{2})}(x)(1-x^2)^m\ud x = \dfrac{\ii^\ell \sqrt{\pi}\Gamma(\ell+2m+1) j_{\ell+m}(k)}{2^{m-1}\Gamma(\ell+1)\Gamma(m+\frac{1}{2})k^m},
\]
combined with the representation of $C_\ell^{(2m+\frac{1}{2})}(x)$ in the basis of $C_j^{(m+\frac{1}{2})}(x)$ via Eq.~\eqref{eq:Cjmh2Cl2mh}, we obtain the Fourier transform:
\begin{equation}\label{eq:ALPFT}
\int_{-1}^1 e^{\ii kx}P_{\ell+2m}^{2m}(x)\ud x = \dfrac{2^m}{k^m}\sum_{j=0}^{[\ell/2]} (2m+1+2\ell-4j)\dfrac{\Gamma(\ell+2m-j+\frac{1}{2})}{\Gamma(\ell+m-j+\frac{3}{2})}\dfrac{(m)_j}{j!}\dfrac{\ii^{\ell-2j} \Gamma(\ell+2m-2j+1) j_{\ell+m-2j}(k)}{\Gamma(\ell-2j+1)}.
\end{equation}
Combining Lemma~\ref{lemma:4F3sum} with the inequality:
\[
\abs{j_n(k)} \le \sqrt{\frac{1}{\abs{k}}},\quad \forall n\in\N_0,~k\in\R,
\]
we bound the Fourier transform in absolute value by:
\[
\abs{\int_{-1}^1 e^{\ii kx}\tilde{P}_{\ell+2m}^{2m}(x)\ud x} \le \sqrt{\dfrac{(\ell+2m+\frac{1}{2})\Gamma(\ell+4m+1)}{\Gamma(\ell+1)}}\dfrac{2\sqrt{\pi}}{8^m\abs{k}^{m+\frac{1}{2}}\Gamma(m+\frac{1}{2})}.
\]
Then:
\begin{align*}
& \abs{\dfrac{\sqrt{n+\tfrac{1}{2}}}{\pi}\int_{\R\setminus\{\abs{k}<k_2\}} j_n(k)\ud k\abs{\int_{-1}^1 e^{\ii kx} \tilde{P}_{\ell+2m}^{2m}(x) \ud x}},\\
& \le \abs{\dfrac{\sqrt{n+\tfrac{1}{2}}}{\pi}\int_{\R\setminus\{\abs{k}<k_2\}} j_n(k)\ud k \sqrt{\dfrac{(\ell+2m+\frac{1}{2})\Gamma(\ell+4m+1)}{\Gamma(\ell+1)}}\dfrac{2\sqrt{\pi}}{8^m\abs{k}^{m+\frac{1}{2}}\Gamma(m+\frac{1}{2})} },\\
& \le \sqrt{\dfrac{(\ell+2m+\frac{1}{2})\Gamma(\ell+4m+1)}{\Gamma(\ell+1)}}\dfrac{2\sqrt{\pi}}{8^m\Gamma(m+\frac{1}{2})}\abs{\dfrac{\sqrt{n+\tfrac{1}{2}}}{\pi}\int_{\R\setminus\{\abs{k}<k_2\}} \dfrac{\abs{j_n(k)}}{\abs{k}^{m+\frac{1}{2}}}\ud k },\\
& \le \sqrt{\dfrac{(2\ell+4m+1)\Gamma(\ell+4m+1)}{\Gamma(\ell+1)}}\dfrac{1}{8^m\Gamma(m+\frac{1}{2})}\sqrt{\frac{2n+1}{\pi}}\abs{\int_{\R\setminus\{\abs{k}<k_2\}} \frac{{\rm d}k}{\abs{k}^{m+1}} } = \varepsilon.
\end{align*}
\end{proof}

\begin{remark}
\begin{enumerate}
\item Theorem~\ref{theorem:finitek} has a fundamentally different interpretation than~\cite[Theorem~$10$]{ONeil-Woolfe-Rokhlin-28-203-10}, which proves the requisite rank property for Fourier--Bessel transforms with kernel $xJ_\mu(xt)$. With $\mu$ fixed, this theorem is much more closely linked to the Fourier transform due to the oscillatory asymptotics of the Bessel function. In Eq.~\eqref{eq:IFTcoefficients}, the two variables related to indices in the matrices of connection coefficients are $n$ and $\ell$, which do not appear as a product.
\item Estimates for the bounds are $k_1(n,\varepsilon) = \OO(n)$, and for $m=\OO(\ell)$ we find that $k_2(\ell,m,n,\varepsilon) = \OO(\ell)$. The limit $m=\OO(1)$ is known to be compressible due to semiseparability and therefore it is not of interest.
\item It is reasonable to suggest that the estimate for $k_2(\ell,m,n,\varepsilon)$ may be improved upon by using uniform asymptotics of Jacobi polynomials~\cite{Frenzen-Wong-37-979-85,Bai-Zhao-148-1-07} or by refining estimates on the summation in Eq.~\eqref{eq:ALPFT} by incorporating the alternation in sign $\ii^{\ell-2j}$ and spherical Bessel asymptotics. On the other hand, with the current estimates in place, we may conclude by Theorem~\ref{theorem:finitek} that the connection coefficients satisfy {\em a rank property} for successful compression in the butterfly algorithm and we leave the compression to the interpolative decomposition.
\item In fact, $\abs{k} \le k_2(\ell,m,n,\varepsilon)$ is sufficient to prove this rank property but the inclusion of $k_1(n,\varepsilon)\le \abs{k}$ shows that there are certain regions in the $\ell,m,n$-cube where more compression is obtained than others.
\item The rank property revealed by Theorem~\ref{theorem:finitek} is universal in the sense that it is the same low-rank property that appears in the matrices of associated Legendre functions sampled at Gauss--Jacobi quadrature nodes. These matrices are part of Tygert's synthesis and analysis-based approach~\cite{Tygert-229-6181-10} that is also accelerated by the butterfly algorithm.
\end{enumerate}
\end{remark}

The partitioning involved in the butterfly algorithm with six levels is illustrated in the left panel of Figure~\ref{fig:butterflyHODLR}.

\newcount\nlevels

\def\Hmatstrong#1#2#3#4#5#6{
\nlevels=#5
\ifnum\the\nlevels>1\relax
  \advance\nlevels by-1\relax
  
  \pgfmathparse{0.5*#1+0.5*#3}\edef\tempx{\pgfmathresult}
  \pgfmathparse{0.5*#2+0.5*#4}\edef\tempy{\pgfmathresult}

  \pgfmathparse{85*(1-\the\nlevels/#6)}\edef\opacity{\pgfmathresult}
  \filldraw[thick, color = black!100, fill = black!\opacity] (\tempx, \tempy) rectangle (#3, #4);

  \begingroup
    \edef\Hmatstrongone{\noexpand\Hmatstrong{#1}{\tempy}{\tempx}{#4}{\the\nlevels}{#6}}
    \edef\Hmatstrongtwo{\noexpand\Hmatstrong{\tempx}{#2}{#3}{\tempy}{\the\nlevels}{#6}}
    \Hmatstrongone\Hmatstrongtwo
  \endgroup

  \advance\nlevels by1\relax
\else
  \fill[black] (#1, #4) -- (#3, #2) -- (#3, #4) -- cycle;
\fi
\ifnum\the\nlevels=#6
  \draw[very thick, black] (#1,#2) rectangle (#3,#4);
  \draw[very thick, black] (#3,#2) -- (#1,#4);
  \node (zero) at (0.92*#1+0.08*#3,0.9*#2+0.1*#4) {\Huge $0$};
\fi
}

\def\Hmatweak#1#2#3#4#5#6{
\nlevels=#5
\ifnum\the\nlevels>1\relax
  \advance\nlevels by-1\relax
  
  \pgfmathparse{0.5*#1+0.5*#3}\edef\tempx{\pgfmathresult}
  \pgfmathparse{0.5*#2+0.5*#4}\edef\tempy{\pgfmathresult}

  \begingroup
    \edef\Hmatweakone{\noexpand\Hmatweak{#1}{\tempy}{\tempx}{#4}{\the\nlevels}{#6}}
    \edef\Hmatweaktwo{\noexpand\Hmatweakprime{\tempx}{\tempy}{#3}{#4}{\the\nlevels}{#6}}
    \edef\Hmatweakthree{\noexpand\Hmatweak{\tempx}{#2}{#3}{\tempy}{\the\nlevels}{#6}}
    \Hmatweakone\Hmatweaktwo\Hmatweakthree
  \endgroup

  \advance\nlevels by1\relax
\else
  \fill[black] (#1, #4) -- (#3, #2) -- (#3, #4) -- cycle;
\fi
\ifnum\the\nlevels=#6
  \draw[very thick, black] (#1,#2) rectangle (#3,#4);
  \draw[very thick, black] (#3,#2) -- (#1,#4);
  \node (zero) at (0.92*#1+0.08*#3,0.9*#2+0.1*#4) {\Huge $0$};
\fi
}

\def\Hmatweakprime#1#2#3#4#5#6{
\nlevels=#5
\ifnum\the\nlevels>1\relax
  \advance\nlevels by-1\relax
  
  \pgfmathparse{0.5*#1+0.5*#3}\edef\tempx{\pgfmathresult}
  \pgfmathparse{0.5*#2+0.5*#4}\edef\tempy{\pgfmathresult}

  \pgfmathparse{75*(1-\the\nlevels/#6)}\edef\opacity{\pgfmathresult}
  
  \filldraw[thick, color = black!100, fill = black!\opacity] (#1, \tempy) rectangle (\tempx, #4);
  \filldraw[thick, color = black!100, fill = black!\opacity] (\tempx, \tempy) rectangle (#3, #4);
  \filldraw[thick, color = black!100, fill = black!\opacity] (\tempx, #2) rectangle (#3, \tempy);

  \begingroup
    \edef\Hmatweakone{\noexpand\Hmatweakprime{#1}{#2}{\tempx}{\tempy}{\the\nlevels}{#6}}
    \Hmatweakone
  \endgroup

  \advance\nlevels by1\relax
\else
  \filldraw[thin, black] (#1, #2) rectangle (#3, #4);
\fi
}

\def\butterfly#1#2#3#4{
  \pgfmathparse{(#2-#1)/2.0^(#4-#3)}\edef\widthx{\pgfmathresult}
  \pgfmathparse{(#2-#1)/2.0^(#3-1)}\edef\widthy{\pgfmathresult}

  \draw[thick, color = black] (#1, #1) rectangle (#2, #2);

  \foreach \i in {0.0,\widthx,...,#2} {
    \draw[thin, color = black] (#1 + \i, #1) -- (#1 + \i, #2);
  }
  \foreach \j in {0.0,\widthy,...,#2} {
    \draw[thin, color = black] (#1, #1 + \j) -- (#2, #1 + \j);
  }
}

\begin{figure}[htbp]
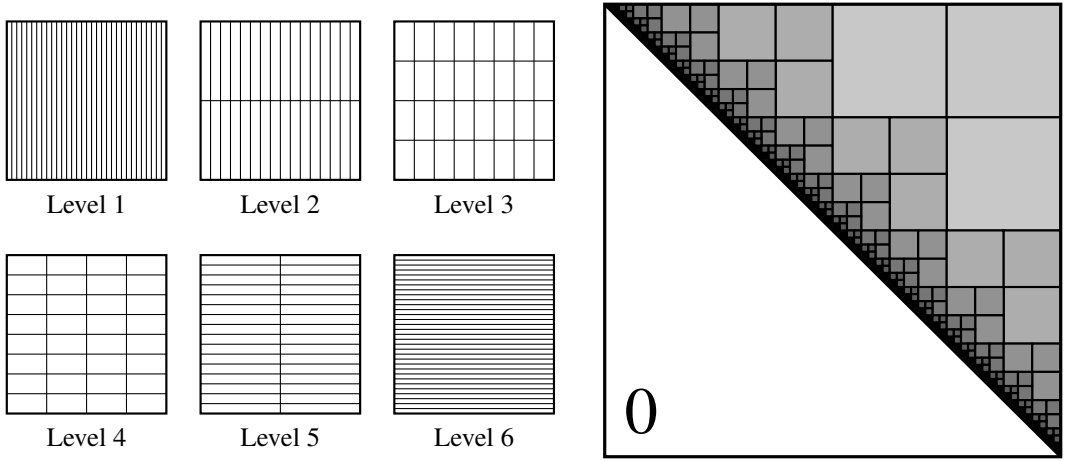

\begin{center}
\begin{tabular}{cc}
\raisebox{0.95\height}{\begin{tabular}{ccc}
\tikz{\butterfly{0}{2.1}{1}{6}} &
\tikz{\butterfly{0}{2.1}{2}{6}} &
\tikz{\butterfly{0}{2.1}{3}{6}}\\
Level $1$ & Level $2$ & Level $3$\\ & & \\
\tikz{\butterfly{0}{2.1}{4}{6}} &
\tikz{\butterfly{0}{2.1}{5}{6}} &
\tikz{\butterfly{0}{2.1}{6}{6}}\\
Level $4$ & Level $5$ & Level $6$\\
\end{tabular}} &
\tikz{\Hmatweak{0}{0}{6}{6}{7}{7}}\\
\end{tabular}
\caption{The spherical harmonic transform proceeds in two steps. Left: an illustration of the butterfly algorithm used to accelerate the conversion of expansions in $\tilde{P}_\ell^{2m}$ to expansions in $\tilde{P}_\ell^0$ and similarly for $\tilde{P}_\ell^{2m+1}$ to $\tilde{P}_\ell^1$. Right: an illustration of the hierarchical decomposition of the matrices of connection coefficients between $\tilde{P}_\ell^0$ and $T_\ell$ and $\tilde{P}_{\ell+1}^1$ and $\sin\theta U_\ell$. The opacity illustrates the data-sparsity in the hierarchical decomposition.}
\label{fig:butterflyHODLR}
\end{center}
\end{figure}

\subsection{An adaptation of the Fast Multipole Method}\label{subsection:FMM}

The right panel of Figure~\ref{fig:butterflyHODLR} describes the familiar hierarchical decomposition of an upper-triangular matrix that characterizes the Fast Multipole Method~\cite{Greengard-Rokhlin-73-325-87}. In this schema, all subblocks in the partition are {\em well-separated} from the main diagonal, relative to their own size. With off-diagonal compression comes accelerated matrix-vector products, among other algebraic properties. Physically, the name is derived from the multipole expansion of an inverse power of the euclidean distance in $\R^n$:
\[
\dfrac{1}{\abs{{\bf r}-{\bf r}_0}^{2\lambda}} = \dfrac{1}{\left(r^2-2rr_0\cos\theta+r_0^2\right)^{\lambda}} = \dfrac{1}{r^{2\lambda}}\sum_{n=0}^\infty \left(\dfrac{r_0}{r}\right)^n C_n^{(\lambda)}(\cos\theta).
\]
In physical space, the assumption of subblocks being well-separated from the main diagonal is equivalent to expansion for sufficiently small $r_0/r$, in which it is readily observed to require $\OO(\log(\varepsilon^{-1}))$ terms in the multipole expansion for approximation to a prescribed accuracy. However, to be precise in this and other settings requires mathematical rigour.

If we refine our hierarchical decomposition of an upper-triangular matrix $A\in\R^{n\times n}$ through $\OO(\log n)$ levels as indices get closer to the diagonal, all subblocks in the partition may be well-approximated by low-rank matrices. In practice, we stop after the dimensions of the subblocks are comparable to the numerical rank required to guarantee an accuracy on the order of machine precision. This implies that the subblocks are no longer data-sparse and nothing is gained from further partitioning.

In the cases of present interest, another asymptotically smooth function succinctly defines the connection coefficients between $\tilde{P}_\ell^0$ and $T_\ell$ and between $\tilde{P}_\ell^1$ and $\sin\theta U_\ell$.

\begin{definition}
The meromorphic function $\Lambda:\C\to\C$ is defined by:
\begin{equation}
\Lambda(z) := \dfrac{\Gamma(z+\frac{1}{2})}{\Gamma(z+1)}.
\end{equation}
\end{definition}

The full asymptotic expansion of $\Lambda(z)$ is derived in~\cite{Elezovic-17-14.2.1-14}, but only a few terms are required in double precision.

\begin{algorithm}[Adapted from Appendix B in~\cite{Bogaert-Michiels-Fostier-34-C83-12}]
If $z > 9.844,\!75$, then 
\begin{align}
\Lambda(z) & \approx \left(1-\dfrac{1}{64(z+\frac{1}{4})^2} + \dfrac{21}{8,\!192(z+\frac{1}{4})^4} - \dfrac{671}{524,\!288(z+\frac{1}{4})^6} + \dfrac{180,\!323}{134,\!217,\!728(z+\frac{1}{4})^8}\right.\nonumber\\
& \qquad \left. - \dfrac{20,\!898,\!423}{8,\!589,\!934,\!592(z+\frac{1}{4})^{10}} + \dfrac{7,\!426,\!362,\!705}{1,\!099,\!511,\!627,\!776(z+\frac{1}{4})^{12}}\right)\left/ \sqrt{z+\tfrac{1}{4}}\right..
\end{align}
gives an approximation to $\Lambda(z)$ that is accurate to the machine precision, $\epsilon_{\rm mach} \approx 2.22\times10^{-16}$. In case $z\le 9.844,\!75$, then we use the recursion:
\begin{equation}
\dfrac{\Lambda(z+1)}{\Lambda(z)} = \dfrac{z+\frac{1}{2}}{z+1},
\end{equation}
until the first condition is satisfied.
\end{algorithm}

Conversion from normalized Legendre polynomials to cosines is given by:
\begin{equation}\label{eq:leg2cheb}
\tilde{P}_n^0(\cos\theta) = \sqrt{n+\tfrac{1}{2}}\sum_{\ell=n,-2}^0 \Lambda(\tfrac{n-\ell}{2})\Lambda(\tfrac{n+\ell}{2})\dfrac{2-\delta_{\ell,0}}{\pi}\cos\ell\theta.
\end{equation}
The inverse relationship is given by:
\begin{equation}
\cos n\theta = -n\sum_{\ell=n,-2}^0 \dfrac{\Lambda(\tfrac{n-\ell-2}{2})\Lambda(\tfrac{n+\ell-1}{2})}{(n-\ell)(n+\ell+1)} \sqrt{\ell+\tfrac{1}{2}}\tilde{P}_\ell^0(\cos\theta).
\end{equation}
Special care must be taken to obtain the limiting values when $n=0$ and when $\ell =n$.

Similarly, conversion from normalized Legendre polynomials of order $1$ to sines is given by:
\begin{equation}
\tilde{P}_{n+1}^1(\cos\theta) = \sqrt{\dfrac{n+\frac{3}{2}}{(n+1)(n+2)}}\sum_{\ell=n,-2}^0 \Lambda(\tfrac{n-\ell}{2})\Lambda(\tfrac{n+\ell+2}{2})\dfrac{2(\ell+1)}{\pi}\sin(\ell+1)\theta,
\end{equation}
and the inverse relationship is given by:
\begin{equation}
\sin (n+1)\theta = -\sum_{\ell=n,-2}^0 \dfrac{(\ell+\frac{3}{2})\Lambda(\tfrac{n-\ell}{2})\Lambda(\tfrac{n+\ell+3}{2})}{(n-\ell-1)(n+\ell+2)}\sqrt{\dfrac{(\ell+1)(\ell+2)}{\ell+\tfrac{3}{2}}}\tilde{P}_{\ell+1}^1(\cos\theta).
\end{equation}

The following definition results in the partitioning in the right panel of Figure~\ref{fig:butterflyHODLR}.

\begin{definition}[Keiner~\cite{Keiner-31-2151-09}]
A square $S\subset \R^2$ defined by the formula $S = [x_0,x_0+c]\times [y_0,y_0+c]$ with $c>0$ is said to be {\em well-separated} if $y_0-x_0\ge2c$.
\end{definition}

Alpert and Rokhlin~\cite{Alpert-Rokhlin-12-158-91} prove rigorous bounds on the univariate Chebyshev interpolants to the $\Lambda$ function on a well-separated square $S$. The bounds are obtained for approximation in either variable while the free variable ranges over all possible values it may take in $S$. In the general ultraspherical setting, Keiner~\cite{Keiner-31-2151-09} shows that the geometric rate of $3^{-k}$ in the original bounds is not optimal. In fact, through analyticity in the open Bernstein ellipse $E_\rho$ with $\rho = 3+\sqrt{8}$, Keiner shows that the optimal geometric decay is $\rho^{-k}$.

Considering Eq.~\eqref{eq:leg2cheb}, the Chebyshev--Legendre transform requires approximation of the function~\cite{Alpert-Rokhlin-12-158-91}:
\[
{\cal M}(x,y) := \dfrac{2}{\pi}\Lambda\left(\frac{y-x}{2}\right)\Lambda\left(\frac{y+x}{2}\right),
\]
on well-separated squares. In the $x$-variable, this results in deriving bounds on the error of the Chebyshev interpolants to:
\[
{\cal M}(x(t),y) = \dfrac{2}{\pi}\Lambda\left(\frac{y-x_0-\frac{c(t+1)}{2}}{2}\right)\Lambda\left(\frac{y+x_0+\frac{c(t+1)}{2}}{2}\right),\quad{\rm for}\quad t\in[-1,1].
\]

\begin{theorem}[Keiner~\cite{Keiner-31-2151-09}]
Let ${\cal M}_k$ denote the degree-$k$ Chebyshev interpolant to ${\cal M}(x(t),y)$. Then for $c > 1$:
\begin{equation}
\sup_{t\in[-1,1]}\abs{{\cal M}(x(t),y)-{\cal M}_k(x(t),y)} \le \dfrac{4M\rho^{-k}}{\rho-1},
\end{equation}
where:
\begin{equation}
M = \dfrac{2\sqrt{2}e^{\frac{5}{3}}}{\pi},\quad{\rm and}\quad \rho = 3+\sqrt{8}.
\end{equation}
\end{theorem}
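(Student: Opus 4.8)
The plan is to reduce the statement to the classical error bound for Chebyshev interpolation of a function analytic in a Bernstein ellipse, and then to supply the two quantities that this bound requires: the ellipse parameter $\rho$, which is dictated by the nearest singularity of ${\cal M}(x(t),y)$ in the complex $t$-plane, and the maximum modulus $M$ attained on that ellipse.

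First I would invoke Bernstein's classical estimate: if $f$ is analytic inside and on the Bernstein ellipse $E_\rho$ and $\abs{f}\le M$ throughout $E_\rho$, then its degree-$k$ Chebyshev interpolant $f_k$ satisfies $\norm{f-f_k}_{\infty}\le\frac{4M\rho^{-k}}{\rho-1}$ on $[-1,1]$. (The analogous projection bound carries the constant $2M$; the extra factor of two is the price of interpolation.) With this in hand, the theorem follows once we verify that (i) ${\cal M}(x(t),y)$ continues analytically into $\overline{E_\rho}$ with $\rho=3+\sqrt{8}$, and that (ii) its modulus there is at most $M=\frac{2\sqrt{2}e^{5/3}}{\pi}$.

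For step (i), recall that $\Lambda(z)=\Gamma(z+\frac12)/\Gamma(z+1)$ is meromorphic with poles precisely at the negative half-integers $z=-\frac12,-\frac32,\ldots$. In ${\cal M}(x(t),y)$ the factor $\Lambda\!\left(\frac{y-x_0-c(t+1)/2}{2}\right)$ has an argument that decreases as $t$ increases, so it supplies the nearest singularities; equating this argument with $-\frac12$ places a pole at $t_0=\frac{2(y-x_0+1)}{c}-1$. The well-separated hypothesis $y_0-x_0\ge 2c$ gives $y-x_0\ge 2c$, whence $t_0\ge 3+\frac{2}{c}>3$. Since $\rho=3+\sqrt{8}$ obeys $\rho^{-1}=3-\sqrt{8}$ and therefore $\frac12(\rho+\rho^{-1})=3$, the rightmost vertex of $E_\rho$ sits at $3$; the real pole $t_0>3$ thus lies strictly outside $\overline{E_\rho}$. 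The remaining poles of both $\Lambda$ factors are likewise real and lie outside $[-3,3]$ (those of the second factor at large negative $t$), so $\overline{E_\rho}$ is free of singularities and, whenever $c>1$, analyticity in fact holds on an open neighbourhood of $\overline{E_\rho}$.

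The hard part is step (ii): the uniform modulus bound with its explicit constant. As $t$ traverses $E_\rho$, the argument $z_1(t)=\frac{y-x(t)}{2}$ sweeps out an affine image of the ellipse whose real part remains $\ge-\frac12$ by the computation above, while its imaginary part may grow proportionally to $c$. I would estimate $\abs{\Lambda(z)}$ over this region using the Beta-integral representation $\Lambda(z)=\frac{1}{\sqrt{\pi}}\int_0^1 s^{z-1/2}(1-s)^{-1/2}\ud s$, valid for ${\rm Re}(z)>-\frac12$ (equivalently, Stirling's formula with an explicit remainder), and then propagate the estimate through the product of the two $\Lambda$ factors together with the prefactor $\frac{2}{\pi}$. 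Maximizing the resulting bound over admissible configurations of $(x_0,y,c)$ subject to $c>1$ and $y_0-x_0\ge 2c$ is what pins down $M=\frac{2\sqrt{2}e^{5/3}}{\pi}$; controlling the $e^{5/3}$ through the Stirling remainder is the most delicate bookkeeping. Assembling (i), (ii), and Bernstein's estimate then yields the claimed bound.
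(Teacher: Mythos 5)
The paper does not actually prove this theorem; it is imported from Keiner, and the only justification the text offers is the remark that the result follows from analyticity of ${\cal M}$ in the Bernstein ellipse $E_\rho$ with $\rho=3+\sqrt{8}$ together with the classical Chebyshev interpolation estimate. Your reconstruction follows exactly that route, and your step (i) is correct and essentially complete: the nearest pole of the first $\Lambda$ factor sits at $t_0=\tfrac{2(y-x_0+1)}{c}-1\ge 3+\tfrac{2}{c}>3$, while $\tfrac12(\rho+\rho^{-1})=3$ is the rightmost vertex of $E_\rho$, so $\overline{E_\rho}$ is free of singularities (for the second factor you should also note that the indices satisfy $x_0\ge 0$, which is what pushes its poles to the left of $-3$). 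The only unfinished business is step (ii), and you overestimate its difficulty: no Stirling analysis or optimization over configurations is needed. Your own Beta-integral representation gives $\abs{\Lambda(z)}\le\frac{1}{\sqrt{\pi}}\int_0^1 s^{\mathrm{Re}(z)-1/2}(1-s)^{-1/2}\ud s=\Lambda(\mathrm{Re}\,z)$ for $\mathrm{Re}\,z>-\tfrac12$, and $\Lambda$ is positive and decreasing on $(-\tfrac12,\infty)$ since $\psi(z+\tfrac12)<\psi(z+1)$. On $\overline{E_\rho}$ one has $\mathrm{Re}\,z_1\ge\tfrac12(y-x_0-2c)\ge 0$ and $\mathrm{Re}\,z_2\ge\tfrac12(y+x_0-c)\ge\tfrac{c}{2}\ge\tfrac12$, whence $\abs{{\cal M}}\le\frac{2}{\pi}\Lambda(0)\Lambda(\tfrac12)=\frac{4}{\pi}$, which is strictly smaller than $M=\frac{2\sqrt{2}e^{5/3}}{\pi}\approx 4.77$; the stated inequality therefore holds a fortiori once you invoke the standard $4M\rho^{-k}/(\rho-1)$ bound for interpolation of a function analytic and bounded by $M$ in $E_\rho$. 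In short: same approach as the source the paper cites, one step left open, and that step closes in two lines from ingredients you already introduced.
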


Thus, a Chebyshev polynomial interpolant of degree $\left\lceil\log_{3+\sqrt{8}}\left(\dfrac{4\sqrt{2}e^{\frac{5}{3}}}{\pi(1+\sqrt{2})\varepsilon}\right)\right\rceil$ approximates ${\cal M}(x(t),y)$ to precision $\varepsilon>0$ on any well-separated square.

In Alpert and Rokhlin's~\cite{Alpert-Rokhlin-12-158-91} Chebyshev--Legendre transform, the Lagrange interpolating formula is originally used to approximate subblocks well-separated from the main diagonal. Since then, Higham~\cite{Higham-24-547-04} has proved the numerical stability of the second barycentric formula, popularized by Berrut and Trefethen~\cite{Berrut-Trefethen-46-501-04}. This formulation also leads to a lower pre-computation since the Chebyshev barycentric weights $\lambda_\ell$ are known analytically.

Let $x_\ell$ and $\lambda_\ell$ be the pair of Chebyshev points and barycentric weights of the first kind:
\[
x_\ell = \cos\left(\dfrac{2\ell+1}{2k+2}\pi\right),\quad{\rm and}\quad\lambda_\ell = (-1)^\ell\sin\left(\dfrac{2\ell+1}{2k+2}\pi\right),\quad{\rm for}\quad \ell = 0,\ldots,k.
\]
Let $a,b,c,d\in\N_0$ and let $m = b-a+1$ and $n = d-c+1$. For each $x\in[a,b]$ and $y\in[c,d]$, consider the degree-$k$ approximation by a polynomial interpolant in the first variable at the mapped Chebyshev points in barycentric form:
\[
f(x,y) \approx p_k(x,y) = \dfrac{\displaystyle \sum_{\ell=0}^k \dfrac{\lambda_\ell f\left(\frac{a+b}{2}+\frac{(b-a)x_\ell}{2},y\right)}{2x-a-b-(b-a)x_\ell}}{\displaystyle \sum_{\ell=0}^k \dfrac{\lambda_\ell}{2x-a-b-(b-a)x_\ell}}.
\]
Consider the matrix that results from sampling the bivariate function $f$ at the integers within the rectangle $[a,b]\times[c,d]$:
\[
[F_{a,b}^{c,d}]_{i,j} := f(i,j)\quad{\rm for}\quad a\le i\le b,~c\le j\le d.
\]
The matrix-vector product $Fv$, where $v\in \R^n$ may be replaced by an approximation that requires only $\OO((k+1)(m+n))$ operations when using $p_k(x,y)$. This is because the barycentric formula allows for the storage of $(k+1)n$ function samples:
\[
[\tilde{F}_{a,b}^{c,d}]_{\ell,j} := f\left(\frac{a+b}{2}+\frac{(b-a)x_\ell}{2},j\right)\in\R^{k\times n},\quad{\rm for}\quad 0\le \ell\le k,~ c\le j\le d,
\]
and $m(k+1)$ weights:
\[
[W_{a,b}]_{i,j} := \left.\dfrac{\lambda_j}{2i-a-b-(b-a)x_j}\right/ \sum_{\ell=0}^k\dfrac{\lambda_\ell}{2i-a-b-(b-a)x_\ell} \in\R^{m\times k},\quad{\rm for}\quad a\le i\le b,~ 0\le j\le k.
\]
With the function samples $\tilde{F}_{a,b}^{c,d}$ and the barycentric weights $W_{a,b}$:
\[
\begin{tikzpicture}[baseline={(current bounding box.center)},scale=1.5,y=-1cm]
  \squarematrix{0.0}{0.0}{$F_{a,b}^{c,d}$}
  \node (approx) at (1.25,0.5) {\Large $\approx$};
  \lowranka{1.5}{0.0}{$W_{a,b}$}
  \lowrankb{2.075}{0.0}{$\tilde{F}_{a,b}^{c,d}$};
\end{tikzpicture}
\]
is the result of interpolating each column of $F_{a,b}^{c,d}$ with a degree-$k$ polynomial.

\begin{remark}
Other methods exist for fast polynomial transforms, such as the Chebyshev--Legendre~\cite{Hale-Townsend-36-A148-14} and Chebyshev--Jacobi transforms~\cite{Slevinsky-17}, and the Toeplitz-dot-Hankel approach~\cite{Townsend-Webb-Olver-17}, all of which are available in {\tt FastTransforms.jl}~\cite{Slevinsky-GitHub-FastTransforms}. After extensive numerical experiments, the hierarchical approach is recommended for this particular problem because the execution is an order of magnitude faster than the pre-computation, rather than the other way around.
\end{remark}

\subsection{Practical methods to reduce the pre-computation}

When a matrix has orthonormal columns, full columns are incompressible in the first step of the butterfly algorithm since the matrix has full rank. Instead of computing IDs of the full columns, the first level of the butterfly algorithm is essentially skipped.

With the ability to convert between neighbouring expansions whose orders differ by two in $\OO(n)$ operations using Givens rotations, the present pre-computation may be ``thinned'' at a modest cost to the execution. Let $k_{\rm avg}$ denote the average numerical rank in the butterfly factorization of any given layer. By \S \ref{subsection:butterfly}, the conversion of one layer of a spherical harmonic expansion to the zeroth or first layer by the butterfly algorithm costs $\OO(k_{\rm avg} n\log n)$; the average numerical rank is a good indicator because subblocks of lower rank have cheaper arithmetic, and for nearly full-rank subblocks the interpolation matrix is nearly a permutation. Similarly, conversion between all spherical harmonic expansions of orders $m+2\mu$, for $\mu = 1,\ldots,k_{\rm avg}$ to those of order $m$ costs $\OO(k_{\rm avg}^2n)$ by Givens rotations. Thus, if the computational complexities are balanced\footnote{And if $k_{\rm avg} = \OO(\log n)$ as supported by the right panel of Figure~\ref{fig:TPMemoryRank}.}, then a ``thin'' plan consists of creating the butterfly factorizations only for orders in stride lengths of $\OO(k_{\rm avg})$. The thin plan, depicted in Figure~\ref{fig:ThinPlan}, has a construction cost of $\OO(1/\log n)$ times the cost of the full pre-computation. In the practical bandlimit of $n < \OO(10,000)$ due to storage limitations on a standard computer, the thin plan's pre-computation is effectively only one order of magnitude larger than execution.

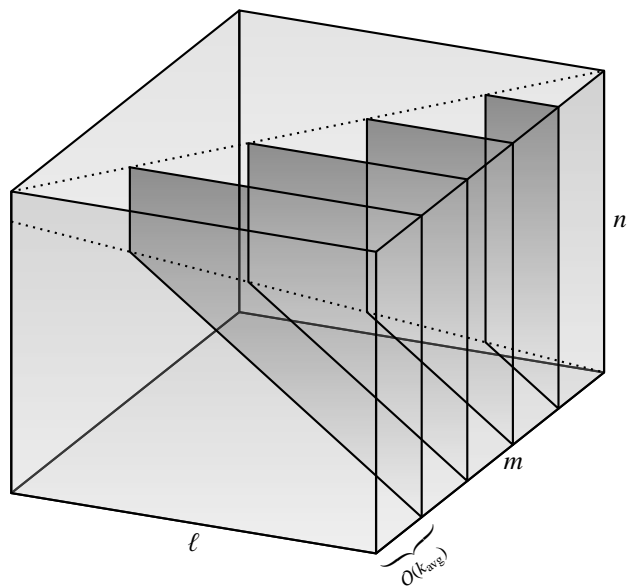
\begin{figure}[htbp]
\begin{center}
\begin{tikzpicture}[x  = {(-0.5cm,-0.4cm)},
                    y  = {(0.6cm,-0.1cm)},
                    z  = {(0cm,1cm)},
                    scale = 2,
                    color = {lightgray}]

\tikzset{facestyle/.style={fill=lightgray,shade,opacity=0.15}}
\tikzset{edgestyle/.style={draw=black,thick,line join=round}}
\tikzset{layerstyle/.style={fill=black,shade}}
\tikzset{layeregdestyle/.style={thick,black,line join=round}}
\tikzset{guidestyle/.style={thick,black,dotted}}

\draw[layerstyle] (3-3/5,0+4/5,2) -- (3-3/5,4,2) -- (3-3/5,4,0) -- (3-3/5,0+4/5,1.8-1.8/5) -- cycle;
\draw[layerstyle] (3-3*2/5,0+4*2/5,2) -- (3-3*2/5,4,2) -- (3-3*2/5,4,0) -- (3-3*2/5,0+4*2/5,1.8-1.8*2/5) -- cycle;
\draw[layerstyle] (3-3*3/5,0+4*3/5,2) -- (3-3*3/5,4,2) -- (3-3*3/5,4,0) -- (3-3*3/5,0+4*3/5,1.8-1.8*3/5) -- cycle;
\draw[layerstyle] (3-3*4/5,0+4*4/5,2) -- (3-3*4/5,4,2) -- (3-3*4/5,4,0) -- (3-3*4/5,0+4*4/5,1.8-1.8*4/5) -- cycle;

\begin{scope}[canvas is yx plane at z=0]
  \path[facestyle] (0,0) rectangle (4,3);
  \path[edgestyle] (0,0) rectangle (4,3);
\end{scope}
\begin{scope}[canvas is zy plane at x=0]
  \path[facestyle] (0,0) rectangle (2,4);
  \path[edgestyle] (0,0) rectangle (2,4);
\end{scope}
\begin{scope}[canvas is zx plane at y=0]
  \path[facestyle] (0,0) rectangle (2,3);
  \path[edgestyle] (0,0) rectangle (2,3);
\end{scope}
\begin{scope}[canvas is zy plane at x=3]
  \path[facestyle] (0,0) rectangle (2,4);
  \path[edgestyle] (0,0) rectangle (2,4);
\end{scope}
\begin{scope}[canvas is zx plane at y=4]
  \path[facestyle] (0,0) rectangle (2,3);
  \path[edgestyle] (0,0) rectangle (2,3);
\end{scope}

\draw[layeregdestyle] (3-3/5,0+4/5,2) -- (3-3/5,4,2) -- (3-3/5,4,0) -- (3-3/5,0+4/5,1.8-1.8/5) -- cycle;
\draw[layeregdestyle] (3-3*2/5,0+4*2/5,2) -- (3-3*2/5,4,2) -- (3-3*2/5,4,0) -- (3-3*2/5,0+4*2/5,1.8-1.8*2/5) -- cycle;
\draw[layeregdestyle] (3-3*3/5,0+4*3/5,2) -- (3-3*3/5,4,2) -- (3-3*3/5,4,0) -- (3-3*3/5,0+4*3/5,1.8-1.8*3/5) -- cycle;
\draw[layeregdestyle] (3-3*4/5,0+4*4/5,2) -- (3-3*4/5,4,2) -- (3-3*4/5,4,0) -- (3-3*4/5,0+4*4/5,1.8-1.8*4/5) -- cycle;

\draw[guidestyle] (3,0,2) -- (0,4,2);
\draw[guidestyle] (3,0,1.8) -- (0,4,0);

\draw[thin,black]
     (3,2,0) node [below] {$\ell$}    
     (1.5,4,0) node [right]        {$\,m$}
     (0,4,1) node [right]        {$n$};
\draw[thin,black]
     (3,4,0) -- node [sloped,below]        {$\underbrace{\quad\quad~~}_{\OO(k_{\rm avg})}$}
     (3-3/5,4,0);
\end{tikzpicture}
\caption{The spherical harmonic transform pre-computation is accelerated by only constructing butterfly factorizations of orders $\OO(k_{\rm avg})$ apart.}
\label{fig:ThinPlan}
\end{center}
\end{figure}

\section{Numerical discussion}

The software package {\tt FastTransforms.jl}~\cite{Slevinsky-GitHub-FastTransforms} written in the {\sc Julia} programming language~\cite{Julia-12,Julia-14} implements the fast and backward stable transforms between spherical harmonic expansions and their bivariate Fourier series. Interpolative decompositions are computed by the {\tt LowRankApprox.jl} package~\cite{LowRankApprox}. The implementations are templated in IEEE single and double precision\footnote{and consequently extended precision, but this hasn't been tested.}, with most matrix-vector multiplications performed by OpenBLAS~\cite{OpenBLAS} at the lowest level. All numerical simulations are performed on a MacBook Pro with a $2.8$ GHz Intel Core i7-4980HQ processor with $4\times256$ KB of L2 cache, $6$ MB of L3 cache, and $16$ GB of $1600$ MHz DDR3 RAM.

Spherical harmonic expansion coefficients $f_\ell^m$ naturally populate a doubly triangular matrix. However, for computational purposes, we organize them into the array:
\[
F = \begin{pmatrix}
f_0^0 & f_1^{-1} & f_1^1 & f_2^{-2} & f_2^2 & \cdots & f_n^{-n} & f_n^n\\
f_1^0 & f_2^{-1} & f_2^1 & f_3^{-2} & f_3^2 & \cdots & 0 & 0\\
\vdots & \vdots & \vdots &  \vdots &  \vdots & \ddots & \vdots & \vdots\\
f_{n-2}^0 & f_{n-1}^{-1} & f_{n-1}^1 & f_n^{-2} & f_n^2 & \iddots & \vdots & \vdots\\
f_{n-1}^0 & f_n^{-1} & f_n^1 & 0 & 0 & \cdots & 0 & 0\\
f_n^0 & 0 & 0 & 0 & 0 & \cdots & 0 & 0\\
\end{pmatrix}\in\R^{(n+1)\times (2n+1)}.
\]
This structure has the advantage of organizing the decay in the coefficients of sufficiently regular functions downward and to the right. The columns of $F$ may be interpreted as having longitudinal basis:
\[
\frac{1}{\sqrt{2\pi}}\{1,e^{-\ii\varphi},e^{\ii\varphi},e^{-\ii2\varphi},e^{\ii2\varphi},\ldots\},\quad{\rm or}\quad \frac{1}{\sqrt{\pi}}\{\tfrac{1}{\sqrt{2}},\sin\varphi,\cos\varphi,\sin2\varphi,\cos2\varphi,\ldots\},
\]
for a real-to-real transform. In the experiments, test spherical harmonic expansion coefficients are first drawn from a standard normal distribution and subsequently, the columns of $F$ are normalized in $\ell^2$. Numerical results report the maximum $\ell^2$ norm over all columns of the error in transforming the spherical harmonic expansion coefficients to Fourier coefficients and back, averaged over three independent trials to reduce the variance. Because the columns of $F$ are normalized, the errors are relative and absolute.

Figure~\ref{fig:SPErrorTimings} reports the numerical results for the $\OO(n^3)$ application of the Givens rotations followed by the rapid conversion from $\tilde{P}_\ell^0$ to $T_\ell$ and $\tilde{P}_\ell^1$ to $\sin\theta U_\ell$. For the sake of comparison, timings are also reported for a 2D discrete cosine transform (DCT) of the same array.

\begin{figure}[htpb]
\begin{center}
\begin{tabular}{cc}
\hspace*{-0.5cm}\includegraphics[width=0.5\textwidth]{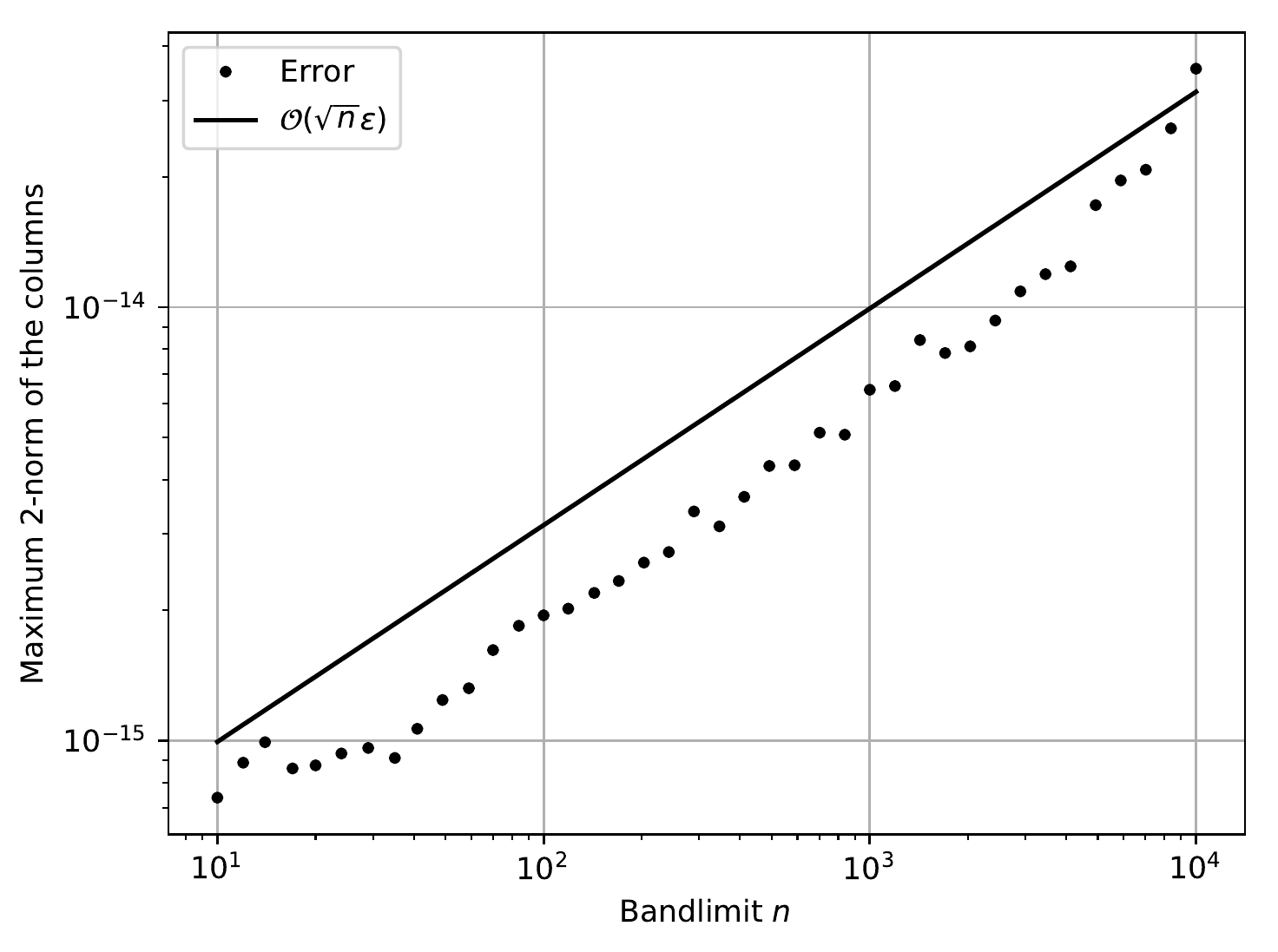}&
\hspace*{-0.5cm}\includegraphics[width=0.5\textwidth]{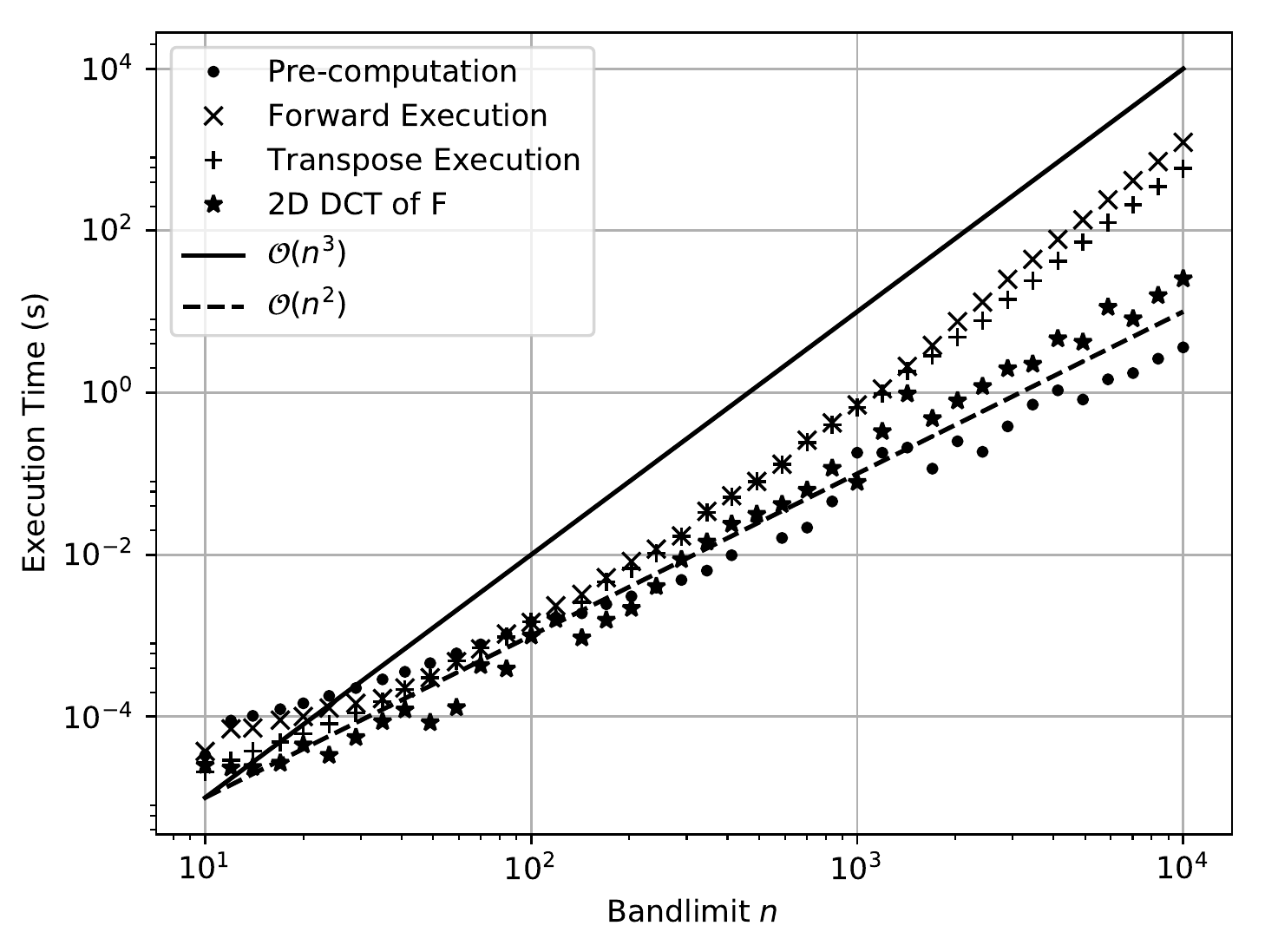}\\
\end{tabular}
\caption{Left: The maximum $2$-norm in the columns of a bandlimited spherical harmonic expansion transformed to its bivariate Fourier series and back. Right: the execution times.}
\label{fig:SPErrorTimings}
\end{center}
\end{figure}

Due to backward stability, the Givens rotations exhibit error growth proportional to $\OO(\sqrt{n}\varepsilon)$. The butterfly factorizations compound this by a factor of $\OO(\sqrt{n})$, as depicted in Figure~\ref{fig:TPErrorTimings} for the thin pre-computation. The thinning parameter of $64$ is utilized in the experiments and not much has been done to further optimize the thinning. Numerical results in the extant literature show error in the numerical evaluation of spherical harmonic expansions at points on the sphere in the $\ell^\infty$ norm. While the $\ell^2$ norm is more natural for the expansion coefficients in the Hilbert space $L^2(\Sph^2,\ud\Omega)$, a very rough conversion would see the present results scaled by $\OO(\log n /\sqrt{n})$.

\begin{figure}[htpb]
\begin{center}
\begin{tabular}{cc}
\hspace*{-0.5cm}\includegraphics[width=0.5\textwidth]{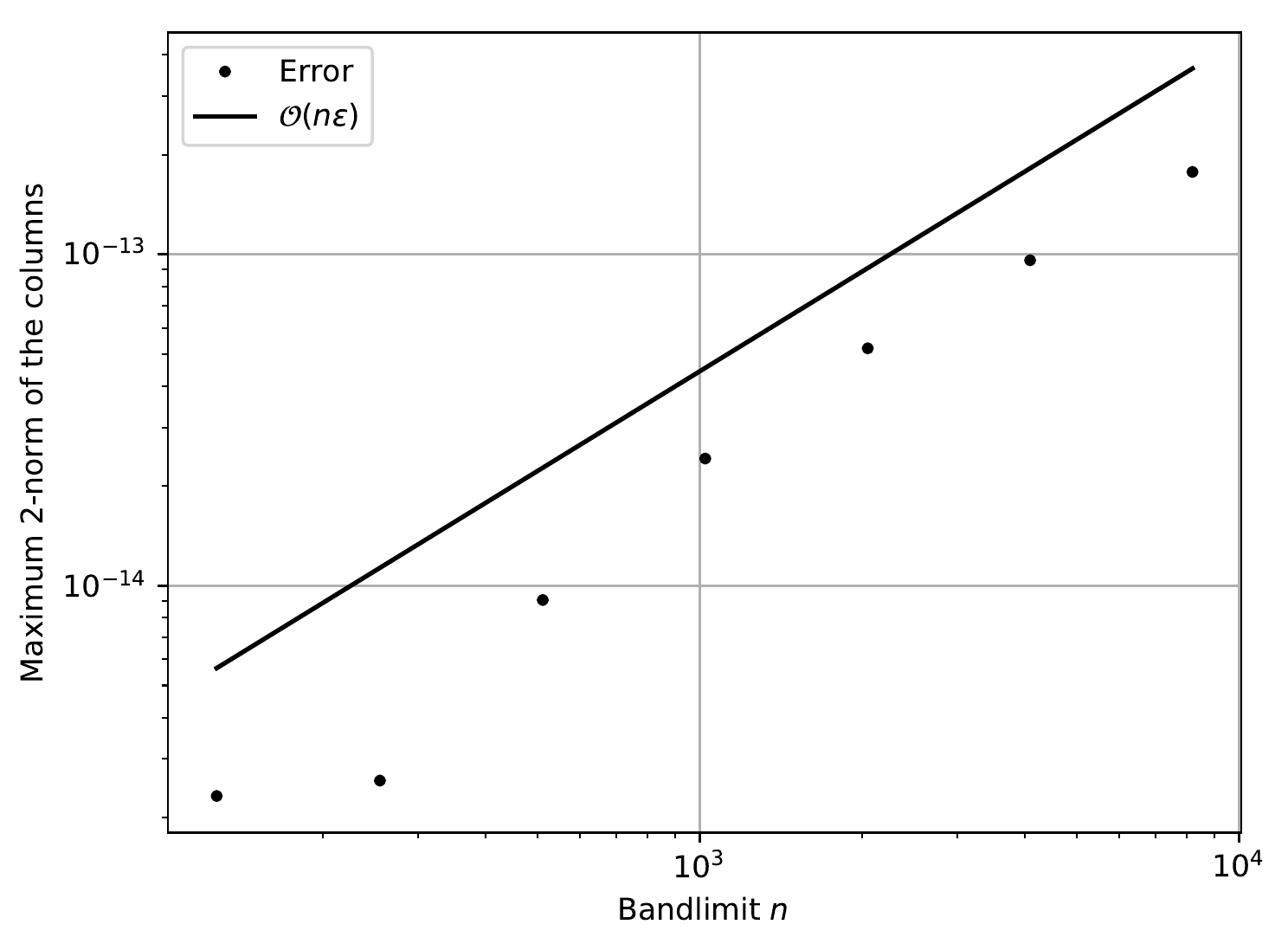}&
\hspace*{-0.5cm}\includegraphics[width=0.5\textwidth]{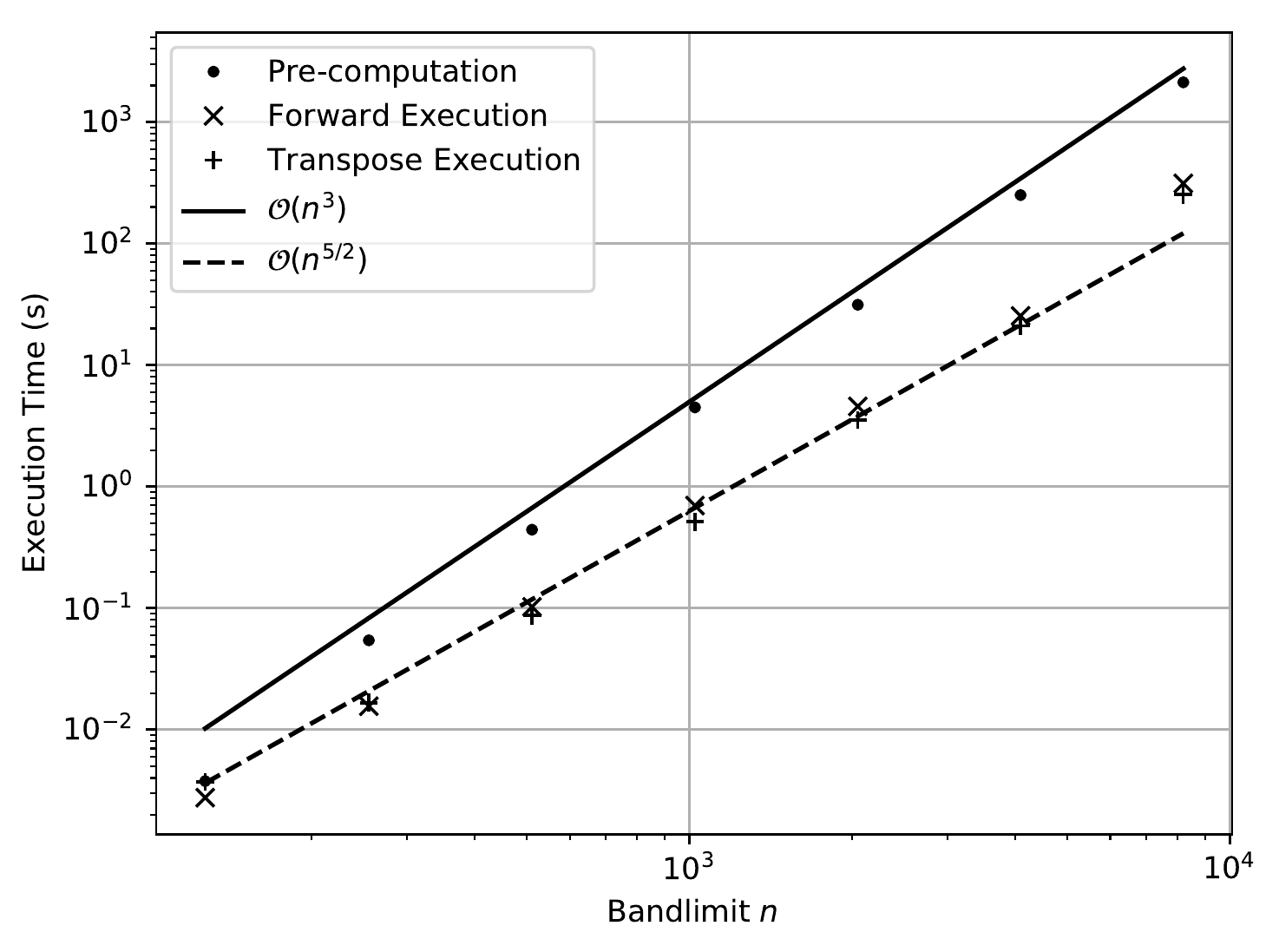}\\
\end{tabular}
\caption{Left: The maximum $2$-norm in the columns of a bandlimited spherical harmonic expansion transformed to its bivariate Fourier series and back. Right: the execution times.}
\label{fig:TPErrorTimings}
\end{center}
\end{figure}

Figure~\ref{fig:TPMemoryRank} shows total memory required to store the thin spherical harmonic pre-computation and the numerical rank statistics. In the right panel of Figure~\ref{fig:TPErrorTimings}, the forward and transpose execution times at a bandlimit of $n=8,191$ are higher than might be extrapolated by the preceding trend. This is explained by the thin pre-computation requiring $26$ GB of memory. Since it requires more memory than may be held in the RAM, the execution times are conflated with memory transfer from the flash hard drive. Additionally, it appears that the execution times follow the $\OO(n^{5/2})$ trend rather than $\OO(n^2\log^2n)$. This is reasonable because the asymptotically optimal complexity of the butterfly algorithm requires a larger bandlimit before becoming fully apparent. This is consistent with Tygert's results~\cite[Tables 1, 3, \& 5]{Tygert-229-6181-10} that begin to show the optimal complexity {\em for a single layer} only for $n\ge\OO(20,000)$.

\begin{figure}[htpb]
\begin{center}
\begin{tabular}{cc}
\hspace*{-0.5cm}\includegraphics[width=0.5\textwidth]{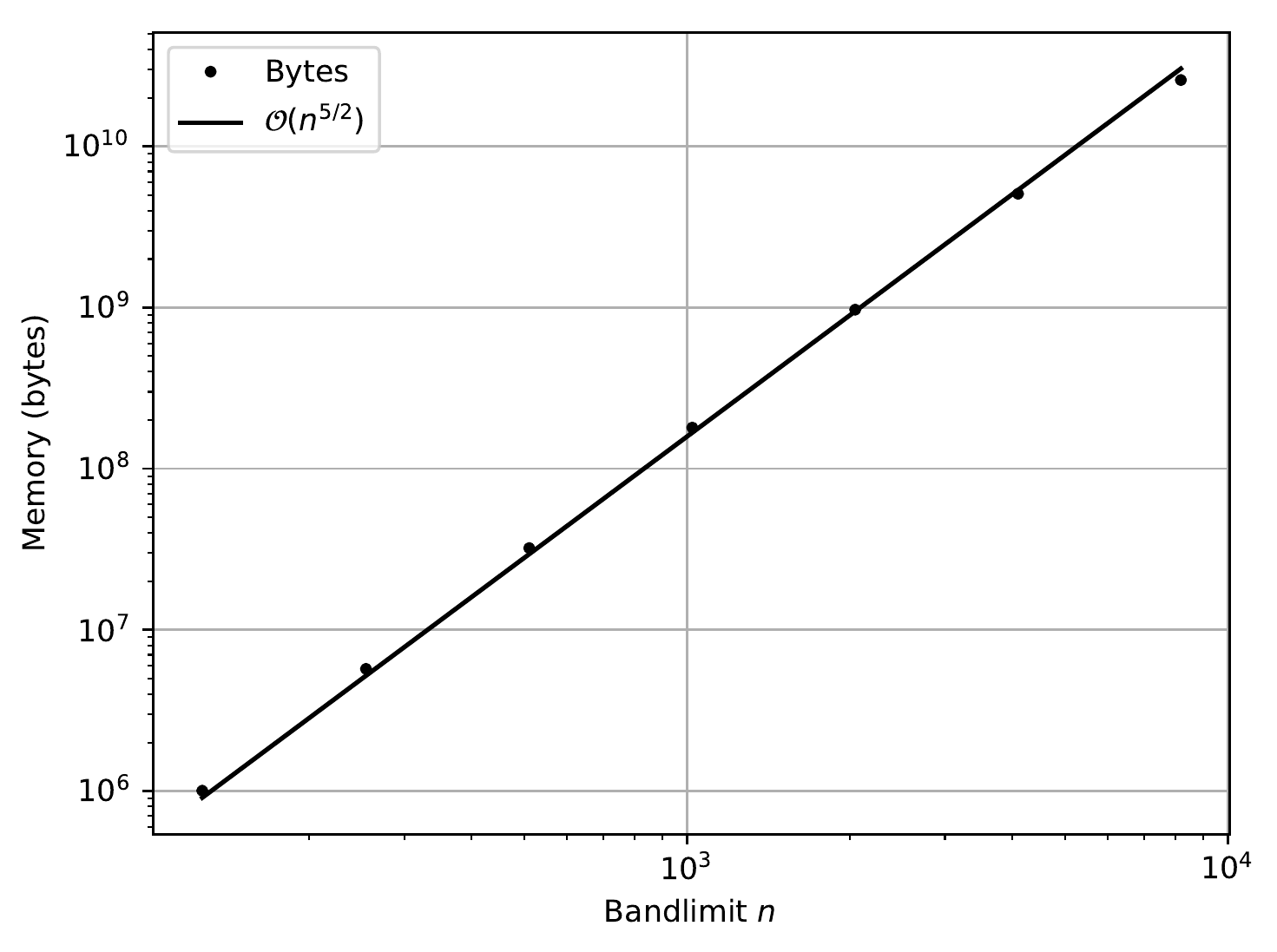}&
\hspace*{-0.5cm}\includegraphics[width=0.5\textwidth]{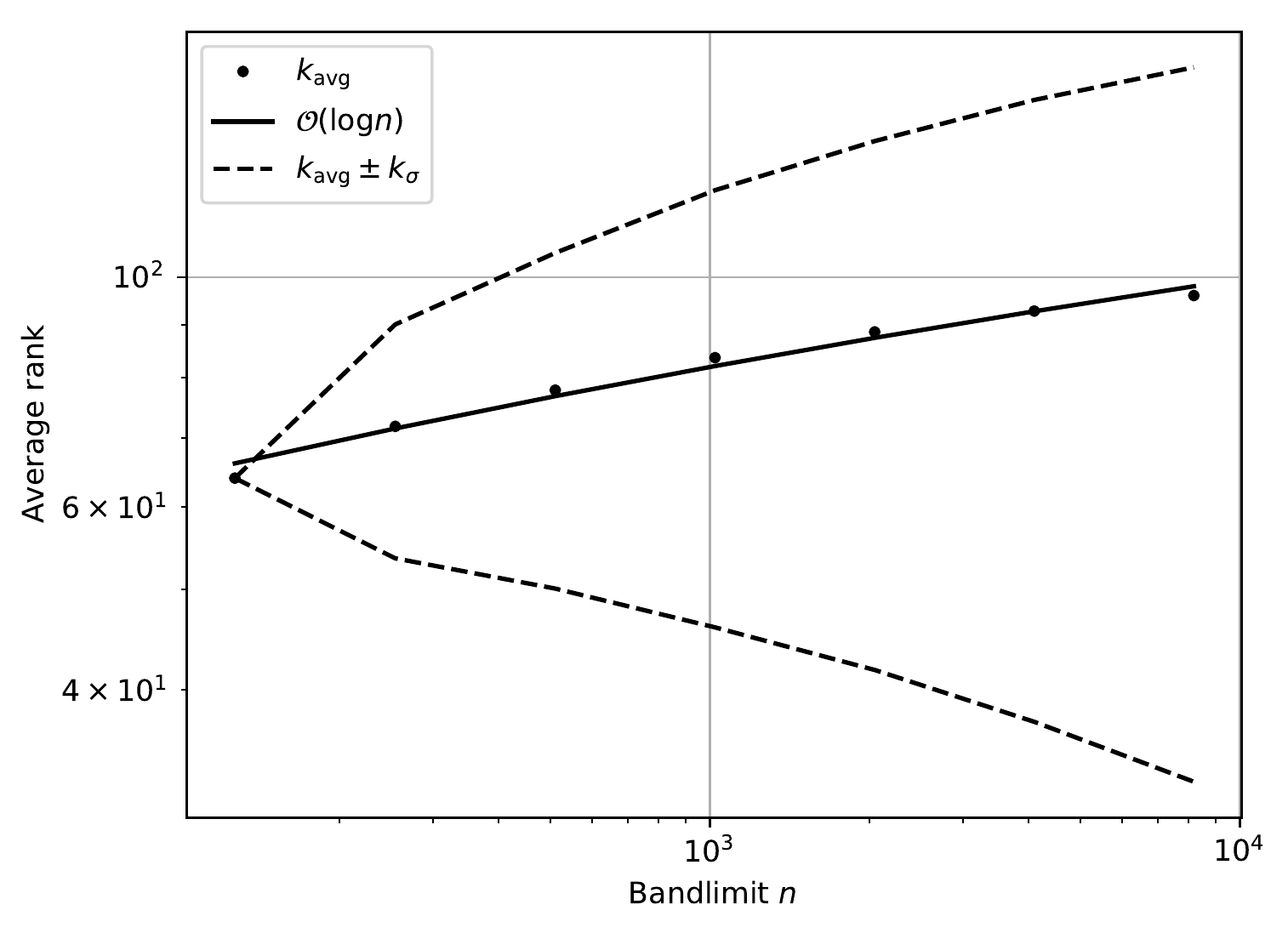}\\
\end{tabular}
\caption{Left: the total number of bytes allocated in the thin spherical harmonic pre-computation. Right: the average numerical rank in the butterfly factorizations bracketed by one standard deviation from the mean.}
\label{fig:TPMemoryRank}
\end{center}
\end{figure}

Tygert has shown~\cite{Tygert-229-6181-10} that the butterfly algorithm may be performed with $\OO(n\log n)$ storage; this is an important consideration for a parallel implementation. For a full spherical harmonic transform, ultimately requiring $\OO(n^2\log n)$ storage, we can afford to perform the butterfly algorithms on a single pair of matrices of connection coefficients that are updated from layer to layer, one each for even and odd layers.

For parallelization over a modest number of processes, the backward stable Givens rotations may still be used: on the master processor, one updates the matrices of connection coefficients and distributes them to the worker processes for compression. For parallelization over numerous processes, application of the Givens rotations would exacerbate asynchrony. Backward stability could be sacrificed for the use of the five-term recurrence relations that are satisfied by the connection coefficients, uncoupling successive layers. However, a parallel implementation is beyond the scope of this report.

\section{Conclusion \& outlook}

The transforms described in the present contribution are rigorously proved to be asymptotically fast and are fast in practice as well. Based on the theory of orthogonal transformations, certain subproblems are well-conditioned and the implementations are backward stable {\em ipso facto} the Givens rotations are known analytically. Thus, this is the first pair of fast transforms to claim backward stability without the use of extended precision in some part of the algorithm or another~\cite{Tygert-227-4260-08,Tygert-229-6181-10,Gruber-Abrykosov-90-525-16}, though some algorithms have empirical evidence substantiating stability. In terms of software, the main contribution is to lower the egregious pre-computation found elsewhere to something more reasonable.

We describe two potential methods to overcome an $\OO(n^3\log n)$ pre-computation. Firstly, Chebyshev polynomial interpolation has been used to accelerate pre-computation in the butterfly algorithm for Fourier integral operators~\cite{Poulson-Demanet-Maxwell-Ying-36-C49-14} and may be further recompressed by another butterfly factorization~\cite{Li-Yang-Martin-Ho-Ying-13-714-15}. In the present context, this requires sampling inner products $\langle \tilde{P}_\nu^0, \tilde{P}_{\lambda+2m}^{2m}\rangle$ and $\langle \tilde{P}_\nu^1, \tilde{P}_{\lambda+2m+1}^{2m+1}\rangle$, where $\nu$ and $\lambda$ are real numbers rather than integers. To this end, the results of Theorem~\ref{theorem:SS} could be extended to larger integer orders and then potentially analytically continued to non-integer degrees. It would take some creativity to obtain $\OO(1)$ computation of entries for real-valued $\lambda$ and $\nu$. Secondly, the Givens rotations may potentially be accelerated on their own. It is surprising in this setting that the analytic structure\footnote{By analytic structure, we mean that the sines and cosines are given analytically by Eq.~\eqref{eq:GRcoefficients}.} of the Givens rotations is known completely. Multiple layers are represented by the sequence of operations:
\[
\begin{tikzpicture}[baseline={(current bounding box.center)},scale=1.5,y=-1cm]
  \foreach \j in {0.0,0.1,0.2} {
    \tikzrotation{2.0*\j+2.4}{\j}
  }
  \foreach \j in {0.6,0.7,0.8} {
    \tikzrotation{2.0*\j+2.2}{\j}
  }
  \node (ddots) at (3.1,0.5) {$\ddots$};

  \foreach \j in {0.0,0.1,0.2} {
    \tikzrotation{2.0*\j+3.4}{\j}
  }
  \foreach \j in {0.6,0.7} {
    \tikzrotation{2.0*\j+3.2}{\j}
  }
  \node (ddots) at (4.1,0.5) {$\ddots$};

  \foreach \j in {0.0,0.1,0.2} {
    \tikzrotation{2.0*\j+4.4}{\j}
  }
  \foreach \j in {0.6} {
    \tikzrotation{2.0*\j+4.2}{\j}
  }
  \node (ddots) at (5.1,0.5) {$\ddots$};
  \node (cdots) at (5.9,0.5) {$\cdots$};
  
\end{tikzpicture}
\]
Since conversions are rectangular, it is possible to trim the number of Givens rotations via turnovers~\cite[Theorem 2.1]{Aurentz-Vandebril-Watkins-35-A255-13}. Turnovers alone do not produce an optimal complexity; however, the overturned structure presents the potential for acceleration. Another possibility is to accumulate Givens rotations into Householder reflectors and relate these reflections in an approximate sense to the Householder decomposition of the DCT.

This work also extends naturally to the conversion of Zernike polynomials~\cite{Vasil-et-al-325-53-16}, orthogonal on the unit disk, to Fourier--Chebyshev series and other families of bivariate analogues of the Jacobi polynomials~\cite{Koornwinder-435-75}, orthogonal on a triangle or a deltoid, among other shapes. These families of bivariate orthogonal polynomials may be organized into layers that are orthogonal functions with respect to the same Hilbert space. This structure ensures that the connection problem is well-conditioned, and allows us to formulate backward stable algorithms.

\section*{Acknowledgments}

This work is motivated by the earnest encouragement of Sheehan Olver, Alex Townsend, Geoff Vasil, and Marcus Webb. I thank Jared Aurentz and Behnam Hashemi for discussions on backward stable numerical linear algebra. I acknowledge the generous support of the Natural Sciences and Engineering Research Council of Canada through a postdoctoral fellowship (6790-454127-2014) whence this work began and a discovery grant (RGPIN-2017-05514).

\bibliography{/Users/Mikael/Bibliography/Mik}

\end{document}